\def\newaliasedtheorem#1[#2]#3{
  \newaliascnt{#1@alt}{#2}
  \newtheorem{#1}[#1@alt]{#3}
  \expandafter\newcommand\csname #1@altname\endcsname{#3}
}
\newsavebox{\measure@tikzpicture}
\renewcommand\labelenumi{(\roman{enumi})}
\renewcommand\theenumi\labelenumi
\newtheorem{theorem}{\bf Theorem}[section]
\newtheorem{remark}[theorem]{\bf{Remark}}
\newtheorem{definition}[theorem]{\bf Definition}
\newtheorem{lemma}[theorem]{\bf Lemma}
\newtheorem{proposition}[theorem]{\bf Proposition}
\newtheorem{corollary}[theorem]{\bf Corollary}
\newcommand{\R}{\mathbb R}
\newcommand{\N}{\mathbb N}
\newcommand{\rank}[1]{\operatorname{rank}(#1)}
\newcommand{\dist}[2]{\operatorname{dist} \left( #1, #2 \right)}
\newcommand{\cofactor}{\operatorname{cof}}
\DeclareMathOperator{\loc}{loc}
\DeclareMathOperator{\Lip}{Lip}
\numberwithin{equation}{section}
\title{$T_5$ configurations and hyperbolic systems}
\author[C. J. P. Johansson and  R. Tione]{Carl Johan Peter Johansson \and Riccardo Tione}
\address{Carl Johan Peter Johansson  
\hfill\break  EPFL SB, Station 8, CH-1015 Lausanne, Switzerland}
\email{carl.johansson@epfl.ch}
\address{Riccardo Tione
\hfill\break Max Planck Institute for Mathematics in the Sciences, Inselstrasse 22, 04103 Leipzig, Germany}
\email{riccardo.tione@mis.mpg.de}
\subjclass[2020]{35D30-35L40-35L65}                                        
\thanks{Electronic version of an article published in Communications of Contemporary Mathematics \url{https://doi.org/10.1142/S021919972250081X} \copyright World Scientific Publishing Company \url{https://www.worldscientific.com/worldscinet/ccm} }
\begin{document}
\doclicenseThis
\maketitle

\begin{abstract}
In this paper we study the rank-one convex hull of a differential inclusion associated to entropy solutions of a hyperbolic system of conservation laws. This was introduced in \cite[Section 7]{KMS}, and many of its properties have already been shown in \cite{LP2,LP1}. In particular, in \cite{LP1} it is shown that the differential inclusion does not contain any $T_4$ configurations. Here we continue that study by showing that the differential inclusion does not contain $T_5$ configurations.
\end{abstract}

\section{Introduction}

We study compactness properties of weak Lipschitz solutions $\varphi$ to the equation
\begin{equation}\label{conslaw}
\partial_{tt}\varphi - \partial_x(a(\partial_x\varphi)) = 0, \text{ in the open and bounded set $\Omega \subset \R^2$}, 
\end{equation}
which additionally satisfy
\begin{equation}\label{entropy}
\partial_{t}(\eta(\partial_t\varphi, \partial_x \varphi)) - \partial_x(q(\partial_t\varphi,\partial_x\varphi)) \le 0,
\end{equation}
for
\[
\eta(x,y) = \frac{1}{2}x^2 + F(y), \quad q(x,y) = a(y)x.  
\]
Here, $a$ is a $C^1$, strictly convex and monotone increasing function on the real line, and $F(y) = \int_0^ya(\xi)d\xi$. Inequality \eqref{entropy} is called an \emph{entropy} for the equation \eqref{conslaw}. It can be checked that, if $\varphi$ is smooth, then \eqref{entropy} is automatically satisfied (with an equality), but it needs to be imposed for weak solutions. This problem has its origins in the classical paper \cite{DIP} by R. J. DiPerna. In \cite[Section 7]{KMS}, B. Kirchheim, S. M\"uller and V. \v Sver\'ak propose the study of \eqref{conslaw}-\eqref{entropy} via differential inclusions. In other words, introducing auxiliary functions
\[
u := \partial_t\varphi, \quad v :=\partial_x\varphi,
\]
and assuming equality in \eqref{entropy}, we can rewrite \eqref{conslaw}-\eqref{entropy} as the system
\begin{equation}\label{sysprediff}
\begin{cases}
\partial_tu - \partial_x (a(v)) = 0,\\
\partial_tv - \partial_xu = 0,\\
\partial_t(\eta(u,v)) - \partial_x (q(u,v)) = 0.
\end{cases}
\end{equation}
At least locally, this is equivalent to asking that there exist Lipschitz functions $\varphi_1,\varphi_2$ such that the Lipschitz map $\psi = (\varphi_1,\varphi,\varphi_2)$ solves, a.e. in $\Omega$,
\[
D\psi = (\partial_t\psi,\partial_x\psi) \in K_a := \left\{\left[\begin{array}{cc} a(v) & u\\ u & v\\  a(v)u & \frac{1}{2}u^2 + F(v) \end{array}\right]: u,v \in \R^2\right\}.
\]
Rewriting a system of PDEs as a differential inclusions allows to prove compactness properties or to use convex integration methods to show unexpected features of the original system. Given a set $K \subset \R^{m\times n}$, we have \emph{compactness for the associated differential inclusion} if, given an open and bounded set $\Omega \subset \R^n$, a sequence $u_j \in \Lip(\Omega,\R^m)$ with equibounded Lipschitz norm and weakly-* converging to $u$ satisfying
\[
\dist{Du_j}{K} \to 0 \text{ in }L^1_{\loc},
\]
then $u_j$ converges strongly in $W_{\loc}^{1,1}$ to $u$. This is related to the classification of \emph{gradient Young measures } supported in $K$. In particular, compactness for the differential inclusion associated to $K$ holds if and only if the only Young measures supported in $K$ are Dirac deltas. We refer the reader to \cite[Section 4]{PER} and references therein for additional information. Usually, showing compactness of differential inclusions is a hard task, and few general methods are known, see for instance \cite[Section 3]{SAP}. 
\\
\\
Conversely, in order to disprove compactness for the differential inclusion, there are some natural steps that need to be taken. First, one needs to study rank-one connections. A rank-one connection in $K$ is a couple of matrices $\{A,B\}$ such that $A,B \in K$ and $\rank{A-B} = 1$. If there exists a rank-one connection in $K$, then it is possible to construct a highly oscillatory approximate solution to the differential inclusion, see for instance \cite[Lemma 3.2]{KIRK}, thus disproving compactness. For the particular case of $K_a$, with $a$ strictly convex and monotone increasing, it has been shown in \cite[Proposition 4]{LP1} that there are can be no rank-one connections inside $K_a$. The next step is to see whether $K$ contains $T_N$ configurations. We will postpone the precise definition of $T_N$ configuration to Section \ref{PrelTN}. $T_N$ configurations in $\R^{m\times n}$ are ordered sets of $N$ distinct matrices $X_1,\dots, X_N$ such that possibly
\[
\rank{X_i - X_j} \ge 2, \quad\forall i \neq j,
\]
but $\{X_1,\dots, X_N\}$ supports a non-trivial gradient Young measure, see \cite[Lemma 2.6]{DMU}. These special sets of matrices, found independently by many authors, see \cite[Section 2.5]{DMU}, have been the fundamental building block in the convex integration construction of the highly irregular critical points of quasiconvex functionals \cite{SMVS} and  polyconvex functionals \cite{LSP}. Since then, they never ceased to be a fundamental tool to construct pathological solutions to PDEs, compare \cite{COR,ST,GMTDI,HRT,FS}.
\\
\\
Concerning the particular differential inclusion associated to $K_a$, Kirchheim, M\"uller and \v Sver\'ak proposed in \cite{KMS} the study of four properties $(P1)-(P4)$ which concern \emph{local} compactness properties of the differential inclusion. To the best of our knowledge, the study of these properties has started in \cite{LP2}. Subsequently, A. Lorent and G. Peng started the study of \emph{non-local} compactness properties of the differential inclusion in \cite{LP1}, showing that the differential inclusion associated to $K_a$ does not contain rank-one connections and $T_4$ configurations. Notice that in $\R^{m\times n}$ for $n \neq 2$ or $m \neq 2$ a $T_N$ configuration does not need to contain\footnote{The situation is different in $\R^{2\times 2}$, where every $T_N$ configuration contains a $T_4$ configuration by \cite{SzeLas}.} a $T_4$ configuration. For instance in \cite{LSP} the author finds a $T_5$ configuration in a set $K_F$ that, by \cite[Proposition 9]{KMS}, cannot contain a $T_4$ configuration. Thus, even after the result of \cite{LP1}, it is unclear whether $K_a$ contains $T_N$ configurations for $N \ge 5$. Aim of this paper is to show that $K_a$ does not contain $T_5$ configurations. Our proof exploits the results of \cite{SzeLas} and is close in spirit to those of \cite{GMTDI,HRT}. Our method yields also a short, independent proof of the non-existence of $T_4$ configurations inside $K_a$, i.e. the main result of \cite{LP1}. We end this introduction by stating our two main results:

\begin{theorem}\label{teo1}
Assume $a \in C^1(\R)$, $a(0) = 0$ and $a$ is strictly convex and increasing. Let $$K_a = \left\{\left[\begin{array}{cc} a(v) & u\\ u & v\\  a(v)u & \frac{1}{2}u^2 + F(v) \end{array}\right]: u,v \in \R^2\right\},$$
where $F(v) = \int_0^va(\xi)d\xi$. Then, $K_a$ does not contain $T_4$ configurations.
\end{theorem}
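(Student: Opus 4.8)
The plan is to prove non-existence of $T_4$ configurations by reduction to absurd, exploiting the rigidity of the map $(u,v) \mapsto X(u,v)$ that parametrizes $K_a$. A $T_4$ configuration consists of four distinct matrices $X_1, \dots, X_4 \in K_a$, together with rank-one matrices $C_1, \dots, C_4$ and scalars $\kappa_i > 1$, arranged so that the closing condition $\sum_i C_i = 0$ holds and each $X_i = P + \sum_{j<i} C_j + \kappa_i C_i$ for a common matrix $P$. Writing $X_i = X(u_i, v_i)$, the first two rows of $K_a$ reproduce exactly the structure of a $2 \times 2$ matrix parametrized by $(u,v)$, so the rank-one conditions $\rank(X_i - X_j) \le 1$ already impose algebraic constraints coming from these first two rows. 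The plan is therefore to first extract, from the requirement $\rank(X_i - X_j) = 1$ for the relevant consecutive pairs (more precisely from the rank-one directions $C_i$), a set of relations between the differences $u_i - u_j$ and $v_i - v_j$.

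First I would carefully record the rank-one connection structure. If $C_i = a_i \otimes b_i$ has rank one, the vanishing of the $2\times 2$ minors of $X_i - X_j$ forces, using the strict convexity of $a$, a precise relationship between the increments of $u$ and $v$ across the configuration: the condition that the top $2\times 2$ block be rank one gives $(a(v_i) - a(v_j))(v_i - v_j) = (u_i - u_j)^2$, which by strict monotonicity and convexity of $a$ is a genuinely nonlinear constraint. The third row, involving $a(v)u$ and $\frac12 u^2 + F(v)$, then supplies the remaining scalar equations. The main idea I expect to use is that these constraints, read along the cyclic structure of the $T_4$ configuration (the loop condition $\sum_i C_i = 0$ together with the ordering), overdetermine the four points $(u_i, v_i)$ and force at least two of them to coincide, contradicting that the $X_i$ are distinct.

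Concretely, I would parametrize the rank-one lines. Since the third row of $X(u,v)$ is determined as an explicit (entropy) function of the first two rows, a rank-one connection between $X(u_i,v_i)$ and $X(u_j,v_j)$ couples the increment in the entropy pair $(a(v)u, \frac12 u^2 + F(v))$ to the increment in $(u,v)$ through the same rank-one direction. The plan is to show that this forces, for each connected pair, a relation of the form $u_i + u_j = $ (something) and to combine the four such relations. Using the results of \cite{SzeLas} — which guarantee that in this setting a $T_4$ configuration has a rigid normalized form — I would reduce to a finite system of equations in the parameters $u_i, v_i$ and the weights $\kappa_i$. The convexity of $a$ enters as a strict inequality (a Jensen-type or discrete convexity estimate) that the loop closure condition cannot satisfy unless the increments degenerate.

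The hard part will be the combinatorial-algebraic bookkeeping of the closing condition: turning the four rank-one constraints plus $\sum_i C_i = 0$ into a contradiction without losing the strict inequality that convexity provides. The key obstacle is that a priori the matrices need only satisfy $\rank(X_i - X_j) \ge 2$ for non-adjacent indices, so I must be careful to use only the adjacency relations dictated by the $T_4$ structure and not assume more rank-one connections than exist; the entire argument hinges on showing that the single scalar identity coming from strict convexity, propagated around the loop, is incompatible with closure. If that strict-convexity identity can be isolated cleanly, the distinctness of the $X_i$ is contradicted and the theorem follows; the same mechanism, with five points and the classification of \cite{SzeLas}, is what will later extend to ruling out $T_5$.
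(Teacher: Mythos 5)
Your proposal rests on a false premise about what a $T_4$ configuration is. You write that ``the rank-one conditions $\rank{X_i - X_j} \le 1$ already impose algebraic constraints coming from these first two rows'' and you propose to extract, ``from the requirement $\rank{X_i-X_j}=1$ for the relevant consecutive pairs,'' the identity $(a(v_i)-a(v_j))(v_i-v_j) = (u_i-u_j)^2$. But in a $T_N$ configuration \emph{no} pair $X_i, X_j$ need be rank-one connected --- not even consecutive ones. The rank-one matrices in Definition \ref{deftn} are the $D_i$, which connect the auxiliary corner points $P_i = Q + D_1 + \dots + D_{i-1}$ to each other and to the $X_i$ (via $X_i - P_i = k_i D_i$); the differences $X_i - X_j$ themselves generically have rank $\ge 2$. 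Indeed, for the set $K_a$ it is known (and cited in the introduction from \cite{LP1}) that there are \emph{no} rank-one connections inside $K_a$ at all, so the constraint you want to propagate around the loop is never available. Everything downstream of that step --- the relations $u_i+u_j = \dots$, the ``loop'' of four scalar identities, the claimed overdetermination --- therefore has no starting point.

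What the characterization of \cite{SzeLas} actually provides is much weaker and is what the paper uses: for positive weights $t^i_j$ one has the \emph{averaged} minor relations $\sum_j t^i_j \det(X_j^Z - X_i^Z) = 0$ for every $2\times 2$ minor $Z$ (Remark \ref{rmk:DeterminantFormulaWithTi}), not the vanishing of individual minors. The paper's proof combines the relation for the minor $Z_{13}$ with the strict-convexity inequality \eqref{eq:ConvexityIncreasingEquation} to produce a strictly positive scalar $(\star)_i$ for every $i$ (Lemma \ref{lemma:FirstLemmaStarQuantity}), then re-expresses $(\star)_i$ purely in terms of the rank-one matrices $C_\alpha$ of the restricted $2\times 2$ configuration via the positive semi-definite sums $S_\beta, T_\beta$ \eqref{eq:FinalExpressionOfTheStarQuantity}, and finally runs a sign case analysis on the $C_\alpha$ to exhibit an index with $(\star)_i \le 0$. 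Your instinct that strict convexity must be ``propagated around the loop'' against the closing condition is in the right spirit, but to make it work you would need to replace the pairwise rank-one identities by these weighted determinant identities, which is essentially a different proof --- the one in the paper.
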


\begin{theorem}\label{teo2}
Assume $a \in C^1(\R)$, $a(0) = 0$ and $a$ is strictly convex and increasing. Let $K_a$ be defined as in Theorem \ref{teo1}. Then, $K_a$ does not contain $T_5$ configurations.
\end{theorem}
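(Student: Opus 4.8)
The plan is to reduce the non-existence of $T_5$ configurations in $K_a \subset \R^{3\times 2}$ to a statement about $T_N$ configurations in $\R^{2\times 2}$, where the structural results of \cite{SzeLas} apply, and then to close the argument using the strict convexity of $a$. Write a generic element of $K_a$ as $X=X(u,v)$ and let $\pi:\R^{3\times 2}\to\R^{2\times 2}$ be the linear projection onto the first two rows, so that
\[
\pi(X(u,v)) = M(u,v) := \begin{pmatrix} a(v) & u \\ u & v\end{pmatrix},
\]
a symmetric matrix. Since $\pi$ is linear it does not increase rank, hence maps rank-one matrices to matrices of rank at most one; moreover $M(u,v)$ determines $(u,v)$, so $\pi$ is injective on $K_a$. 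Given a hypothetical $T_5$ configuration $X_1,\dots,X_5\in K_a$ with inner points $P_1,\dots,P_5$ and rank-one directions $C_i = P_{i+1}-P_i$ (indices mod $5$), I would record the defining relations $X_i = P_i + \kappa_i C_i$ with $\kappa_i>1$ together with $\sum_i C_i = 0$, and push everything forward by $\pi$. As long as no projected direction $\pi(C_i)$ vanishes, this produces a genuine $T_5$ configuration in $\R^{2\times 2}$ whose outer matrices $M(u_i,v_i)$ all lie on the two-dimensional surface $S := \{M(u,v): u,v\in\R\}$, that is, on the graph of the strictly convex function $a$ over the off-diagonal entry.

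By \cite{SzeLas} every $T_N$ configuration in $\R^{2\times 2}$ contains a $T_4$ configuration (the fact quoted in the footnote). Consequently it suffices to prove that $S$ does not contain the outer points of any $T_4$ configuration in $\R^{2\times 2}$; this single statement then yields both Theorem \ref{teo1} and Theorem \ref{teo2}. Here the strict convexity of $a$ is the essential input. One clean necessary condition comes from the fact that $\det$ is, up to affine terms, the unique quasiaffine function on $\R^{2\times 2}$: testing the laminate $\nu=\sum_i\lambda_i\delta_{M_i}$ associated to the configuration against $\det$ gives the exact identity
\[
\sum_i\lambda_i\big(a(v_i)v_i-u_i^2\big)=\Big(\sum_i\lambda_i a(v_i)\Big)\Big(\sum_i\lambda_i v_i\Big)-\Big(\sum_i\lambda_i u_i\Big)^2,
\]
i.e. $\operatorname{Cov}_\lambda(a(v),v)=\operatorname{Var}_\lambda(u)$. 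Combining this with the explicit description of $T_4$ configurations in $\R^{2\times 2}$ from \cite{SzeLas} — which locates the outer points relative to the two rank-one rulings of $\{\det=0\}$ — I would argue that the diagonal data $(a(v_i),v_i)$, constrained to the strictly convex curve $\{(a(v),v)\}$, cannot be arranged into a closed inner polygon with rank-one edges: strict convexity forces a strict inequality in a relation that the loop condition $\sum_i C_i = 0$ requires to hold with equality.

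The main obstacle is twofold. First, the projection $\pi$ need not be faithful: a direction $C_i$ whose first two rows vanish collapses under $\pi$, so the degenerate cases in which one or more $\pi(C_i)$ are zero must be handled separately, using the third row together with the absence of rank-one connections in $K_a$ from \cite[Proposition 4]{LP1}. Second, and more seriously, the determinant identity by itself is not enough, precisely because on $\R^{2\times 2}$ the determinant is the only nonlinear null Lagrangian; the contradiction must instead be drawn from the finer geometry of $T_4$ configurations in \cite{SzeLas} combined with the convexity of $a$. Making this incompatibility quantitative is where the real work lies, and it is also what makes the argument sensitive to the value of $N$, which is why the case $N=5$ is treated explicitly rather than being settled for all $N$ simultaneously.
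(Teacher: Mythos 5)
Your reduction breaks down at the step where you claim it suffices to show that the projected surface $S=\{M(u,v):u,v\in\R\}\subset\R^{2\times 2}$ contains no $T_4$ configuration. Projecting onto the first two rows discards the third row of $K_a$, which encodes the entropy and is precisely what makes the inclusion rigid: on $S$ alone one has $\det\bigl(M(u_1,v_1)-M(u_2,v_2)\bigr)=(a(v_1)-a(v_2))(v_1-v_2)-(u_1-u_2)^2$, and since $a$ is increasing the first term is positive for $v_1\neq v_2$, so $S$ is filled with rank-one connections (take $u_1-u_2=\pm\sqrt{(a(v_1)-a(v_2))(v_1-v_2)}$). The projected set therefore has a large rank-one convex hull, and the ``single statement'' you want to prove about $S$ is not available. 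Nor can you fall back on Theorem \ref{teo1}: a $T_4$ configuration among the projected points $M(u_i,v_i)$ produced by the theorem of \cite{SzeLas} need not lift to a $T_4$ configuration of the corresponding $X_i\in K_a\subset\R^{3\times 2}$, because its rank-one directions and closing condition live in $\R^{2\times 2}$ and impose nothing on the third rows. Your own text concedes that the determinant identity is insufficient and that ``the real work'' of extracting a contradiction from convexity remains; that work is exactly what is missing, and it cannot be carried out on $S$ alone.

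The paper's mechanism is different and essentially forced by the above obstruction. It does restrict the rank-one increments to the first two rows, setting $C_j=D_j^{Z_{12}}$, so that each $C_j$ is symmetric of rank one and hence semi-definite with a definite sign; but the strict convexity of $a$ is injected through the minors $Z_{13}$ involving the \emph{third} row. Lemma \ref{lemma:FirstLemmaStarQuantity} shows that $(\star)_i=\xi_i\sum_j t^i_j(Y_j)_{11}\det(Y_j-Y_i)>0$ for every $i$, for any $T_N$ configuration contained in $K_a$. An algebraic identity (Lemma \ref{eq:IntermediaryExpressionOfStar} together with \eqref{eq:FinalExpressionOfTheStarQuantity}) then rewrites $(\star)_i$ in terms of the positive semi-definite sums $S_\beta$ and $T_\beta$, and a case analysis over the $32$ sign patterns of $(C_1,\dots,C_5)$ produces, in every case not already excluded by Propositions \ref{prop:SuperEasyCase}--\ref{prop:EasyCaseTwo}, an index $i$ with $(\star)_i\le 0$, which is the desired contradiction. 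If you want to salvage your outline, the entropy row must enter the argument quantitatively, not merely as a bookkeeping device for the degenerate projections.
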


The paper is organized as follows. In Section \ref{PrelTN}, we recall the definition of $T_N$ configuration and a characterization coming from \cite{SzeLas,GMTDI}. In Section \ref{PropKa}, we show some useful inequalities on minors of elements of $K_a$, that will be used in Section \ref{TheEnd} to show our two main Theorems \ref{teo1}-\ref{teo2}.

\subsection{Notation}\label{PrelTN}
The set of matrices with $m$ rows and $n$ columns will be denoted by $\R^{m \times n}$. The Hilbert-Schmidt scalar product in $\R^{m \times n}$ is denoted by $\langle \cdot , \cdot \rangle$. For any matrix $A \in \R^{m \times m}$, we define $\cofactor{A}$ as the $m \times m$ matrix whose value in the $i$-th row and $j$-th column is
\[
 (\cofactor{A})_{ij} = (- 1)^{i+j} \det (A^{j,i})
\]
where $A^{j,i}$ denotes the $(m-1) \times (m-1)$ matrix obtained by removing the $i$-th column and the $j$-th row from $A$. The set of symmetric matrices in $\R^{m \times m}$ is denoted by $\R^{m \times m}_{\text{sym}}$. For two symmetric matrices $A$ and $B$, we will write $A \le B$ if $B - A$ is positive semi-definite. Finally, given vectors $a_1,\dots, a_n$ for some $n \ge 1$, we set
\[
\sum_{k = i}^j a_k := 0 \quad \text{ whenever } j < i.
\]

\section*{Acknowledgement}
The first author was supported by the SNSF Grant 182565 and by the Swiss State Secretariat for Education, Research and Innovation (SERI) under the contract number M822.00034. The second author wishes to thank Prof. L\'aszl\'o Sz\'ekelyhidi for introducing him to this problem.


\section{Preliminaries on $T_N$ configurations}\label{PrelTN}
In this section we recall a useful characterization of $T_N$ configurations. This was first discovered by L. Sz{\'{e}}kelyhidi in \cite{SzeLas} for $T_N$ configurations contained in $\R^{2 \times 2}$ and generalised by C. De Lellis, G. De Philippis, B. Kirchheim and the second named author in \cite{GMTDI} to study $T_N$ configurations contained in $\R^{m \times n}$ for arbitrary integers $m$ and $n$.

\begin{definition}\label{deftn}
An ordered set of $N$ distinct matrices $\{ X_1, \ldots, X_N \}$ is said to induce a $T_N$ configuration if there exist matrices $Q, D_1, \ldots, D_N \in \R^{m \times n}$ and real numbers $k_i > 1$ such that 
\begin{enumerate}
 \item  $\rank{D_i} \leq 1$ for each $i$;
 \item the following \emph{closing condition} is fulfilled
 \[
  \sum_{i = 1}^N D_i = 0;
 \]
 \item it holds
 \begin{equation}\label{eq:TNEquations}
 X_i = Q + \sum_{j = 1}^{i-1}D_j + k_iD_i, \quad \forall 1 \le i \le N.
 \end{equation}
\end{enumerate}
\end{definition}

Let us introduce some notation. Let $I$ and $J$ be multi-indices. More precisely, we will use $I$ to denote multi-indices referring to ordered sets of rows of matrices and $J$ for multi-indices referring to ordered sets of columns of matrices. In this section, we deal with matrices in $\R^{m \times n}$ and we will therefore have
\begin{align*}
I &= (i_1, \ldots, i_r) \in \N^r, 1 \leq i_1 < \ldots < i_r \leq m \\
J &= (j_1, \ldots, j_s) \in \N^s, 1 \leq i_1 < \ldots < i_s \leq n
\end{align*}
We will use the notation $|I| = r$ and $|J| = s$. In this paper, we will always have $r = s$.
We define 
\[
\mathcal{A}_r = \{ (I, J) : \text{$I$ and $J$ are ordered sets and } |I| = |J| = r\}.
\]
For any matrix $M \in \R^{m \times n}$ and any $Z \in \mathcal{A}_r$, we write $M^Z$ to denote the $r \times r$ square matrix obtained by considering just the elements $M_{ij}$ where $(i,j) \in I \times J$.
Given a set of matrices $\{ X_1, \ldots , X_N \} \subset \R^{m \times n}$, $\mu \in \R$ and $Z \in \mathcal{A}_r$, we introduce the matrix 
\[
 A^{\mu}_Z = 
 \begin{bmatrix}
 0 & \det (X_2^Z - X_1^Z) & \det (X_3^Z - X_1^Z) & \cdots & \det (X_N^Z - X_1^Z) \\
 \mu \det (X_1^Z - X_2^Z) & 0 & \det (X_3^Z - X_2^Z) & \cdots & \det (X_N^Z - X_2^Z) \\
 \vdots & \vdots & \vdots & \ddots & \vdots \\
 \mu \det (X_1^Z - X_N^Z) & \mu \det (X_2^Z - X_N^Z) & \mu \det (X_3^Z - X_N^Z) & \cdots & 0\\
 \end{bmatrix}.
\]

The following proposition provides us with a characterisation of $T_N$ configurations.

\begin{proposition}\label{prop:CharacterizationTN}
A set $\{ X_1, \ldots, X_N \} \subset \R^{m \times n}$ induces a $T_N$ configuration if and only if there is a real $\mu > 1$ and a vector $\lambda \in \R^N$ with positive components such that
\[
 A_Z^{\mu} \lambda = 0 \quad \forall Z \in \mathcal{A}_2.
\]
\end{proposition}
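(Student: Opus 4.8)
The plan is to work one minor at a time and exploit that, for a fixed $Z=(I,J)\in\mathcal{A}_2$, the map $M\mapsto \det(M^Z)$ is a quadratic form on $\R^{m\times n}$ whose polarization
\[
b_Z(A,B):=\tfrac12\big(\det((A+B)^Z)-\det(A^Z)-\det(B^Z)\big)
\]
is a symmetric bilinear form. The only structural inputs I use are: (a) $\rank{D_i}\le1$ forces $\det(D_i^Z)=b_Z(D_i,D_i)=0$, so each $D_i$ is a \emph{null vector} of $b_Z$; and (b) the closing condition $\sum_iD_i=0$ gives, for every $i$, $\sum_jb_Z(D_i,D_j)=b_Z(D_i,\sum_jD_j)=0$, i.e. the symmetric matrix $\beta^Z:=(b_Z(D_i,D_j))_{ij}$ has vanishing diagonal and vanishing row sums.

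For the ``only if'' direction I would start from \eqref{eq:TNEquations} and record the difference formula $X_j-X_i=(1-k_i)D_i+\sum_{i<l<j}D_l+k_jD_j$ for $i<j$ (and its cyclic wrap-around $X_1-X_N=k_1D_1+(1-k_N)D_N$). Writing $X_j^Z-X_i^Z=\sum_lc^{ij}_lD_l^Z$ with the coefficient vector $c^{ij}$ read off above, the null-vector property collapses the quadratic form to
\[
\det(X_j^Z-X_i^Z)=\sum_{l\neq l'}c^{ij}_lc^{ij}_{l'}\,b_Z(D_l,D_{l'}).
\]
All of the $Z$-dependence is now carried by the single matrix $\beta^Z$, so the rows of $A_Z^{\mu}\lambda$ become fixed bilinear expressions in $\beta^Z$. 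I would then guess the weights directly from the recursion hidden in the difference formula: the natural candidate is $\mu=\prod_{i=1}^N\frac{k_i}{k_i-1}>1$ together with a positive $\lambda$ solving the induced cyclic recurrence (so that $\lambda_{i+1}/\lambda_i$ is the appropriate ratio of the $k$'s), and verify $A_Z^{\mu}\lambda=0$ by reorganizing the double sum into telescoping blocks, cancelling off-block terms with the zero-row-sum property of $\beta^Z$. Crucially $\mu$ and $\lambda$ depend only on the $k_i$, hence are the same for every $Z$, which is exactly what the statement demands.

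The harder, ``if'' direction is a reconstruction. Given $\mu>1$ and $\lambda>0$ with $A_Z^{\mu}\lambda=0$ for all $Z$, I would take the cyclic relations $X_{i+1}-X_i=(1-k_i)D_i+k_{i+1}D_{i+1}$ as an ansatz, with the $k_i$ (equivalently the ratios dictated by $\lambda,\mu$) as the unknowns, and solve this cyclic linear system for candidate matrices $D_i$ and a base point $Q=X_1-k_1D_1$. A single scalar identity does not see ranks, but imposing $A_Z^{\mu}\lambda=0$ \emph{simultaneously for every} $Z\in\mathcal{A}_2$ pins the reconstructed $D_i$ down to be null vectors of every $q_Z=\det(\,\cdot^{\,Z})$, i.e. to have all $2\times2$ minors zero, hence $\rank{D_i}\le1$; the same family of identities forces the closing condition $\sum_iD_i=0$. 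Finally the inequalities $k_i>1$ (equivalently that each $X_i$ lies strictly beyond the vertex $P_{i+1}=Q+\sum_{j\le i}D_j$ along the rank-one direction $D_i$) must be extracted from $\mu>1$ and the positivity of $\lambda$.

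I expect the main obstacle to be exactly this last direction: passing from the collection of quadratic (minor) identities, one per $Z$, to a genuine linear rank-one structure on the $D_i$, and then reading off the sign and size constraints $k_i>1$ and $\mu>1$. This is where the argument must use more than one minor at a time, and where a Perron--Frobenius type or explicit-sign analysis of the cyclic system is needed to guarantee that $\lambda$ can be taken with positive entries and $\mu>1$; this is precisely the content borrowed from \cite{SzeLas,GMTDI}.
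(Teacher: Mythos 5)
The paper itself does not prove this proposition: it is quoted from \cite{SzeLas} and \cite{GMTDI}, so your sketch can only be measured against the standard argument there (part of whose algebra reappears in Lemma~\ref{l:linear}). Your skeleton is the right one: the value $\mu=\prod_{i=1}^N k_i/(k_i-1)>1$ is indeed the one forced by \eqref{defnk} (summing the relations $k_i(\mu-1)\lambda_i=\mu\sum_{j\le i}\lambda_j+\sum_{j>i}\lambda_j$ telescopically gives exactly this product, and the resulting $\lambda_i$ are automatically positive), the reduction of each $\det(X_j^Z-X_i^Z)$ to the polarized forms $b_Z(D_l,D_{l'})$ is correct, and you correctly locate the crux in the converse. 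For the forward direction, a cleaner route than your telescoping is to use that $\sum_j t^i_j(X_j-P_i)=0$ with $P_i=\sum_j t^i_jX_j$, which kills the cross term and reduces $\sum_j t^i_j\det(X_j^Z-X_i^Z)$ to $\sum_j k_j(k_j-1)t^i_j\det(D_j^Z)+k_i^2\det(D_i^Z)$, manifestly zero when $\rank{D_j}\le 1$.

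The genuine gap is the step you yourself flag: in the ``if'' direction you assert that the identities ``pin the reconstructed $D_i$ down to be null vectors of every $q_Z$'' and defer the mechanism to a ``Perron--Frobenius type'' analysis borrowed from the references. As written this is not a proof, and it is exactly the nontrivial content of the proposition. The step can be closed concretely, and for each $Z$ \emph{separately} (not by playing different $Z$'s against each other, as your phrasing suggests): after reconstructing $Q$, $D_i$, $k_i$ from $\mu,\lambda$ by pure linear algebra (which already yields $\sum_i D_i=0$ and $k_i>1$, since $k_i-1=\xi_i/((\mu-1)\lambda_i)>0$), the $N$ rows of $A_Z^\mu\lambda=0$ become the linear system
\[
\sum_{j=1}^N k_j(k_j-1)\,t^i_j\,\delta_j+k_i^2\,\delta_i=0,\qquad i=1,\dots,N,\qquad \delta_j:=\det(D_j^Z).
\]
Subtracting consecutive rows and using $\xi_i k_i^2=\xi_{i+1}k_i(k_i-1)$ gives $\xi_{i+2}k_{i+1}(k_{i+1}-1)\delta_{i+1}=\xi_{i+1}(k_i-1)^2\delta_i$, so all $\delta_j$ share a sign; since every coefficient in any single row is strictly positive, this forces $\delta_j=0$ for all $j$, hence all $2\times 2$ minors of each $D_i$ vanish and $\rank{D_i}\le 1$. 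Without this (or an equivalent) sign argument your proposal remains an outline of the proof rather than a proof.
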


\begin{corollary}
Let $\{ X_1, \ldots, X_N \} \subset \R^{m \times n}$ induce a $T_N$ configuration and let $\lambda \in \R^N$ be a vector with positive components given by Proposition~\ref{prop:CharacterizationTN} such that 
\[
 A_Z^{\mu} \lambda = 0 \quad \forall Z \in \mathcal{A}_2.
\]
Define the vectors $t^i$ as 
\begin{equation}\label{eq:DefinitionOfVectorsti}
 t^i = \dfrac{1}{\xi_i} (\mu \lambda_1, \ldots, \mu \lambda_{i-1}, \lambda_{i}, \ldots, \lambda_{N}) \quad \forall i = 1, \ldots, N
\end{equation}
where $\xi_i > 0$ is a normalising constant such that $\| t^i \|_1 = 1$. Define the coefficients $k_i$ through
\begin{equation}\label{defnk}
 k_i = \dfrac{\mu \lambda_1 + \ldots + \mu \lambda_{i} +  \lambda_{i+1} + \ldots +  \lambda_{N}}{(\mu - 1) \lambda_i} \quad i = 1, \ldots, N.
\end{equation}
Then define the matrices $D_j$ with $j = 1, \ldots, N-1$ and $Q$ by solving recursively
\begin{equation}\label{eq:ComputingTheRankOneMatrices}
 \sum_{j = 1}^{N} t^i_j X_j = Q + D_1 + \ldots + D_{i-1}, \quad \forall i = 1, \ldots, N
\end{equation}
and setting $D_N := - D_1 - \ldots - D_{N-1}$.
Then $P, D_1, \ldots, D_N, k_1, \ldots, k_N, X_1, \ldots, X_N$ solve equations \eqref{eq:TNEquations}.
\end{corollary}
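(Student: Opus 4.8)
The plan is to verify the items of Definition~\ref{deftn} directly from the explicit formulas, reading the defining relation \eqref{eq:ComputingTheRankOneMatrices} as the definition of the partial sums $P_i := Q + D_1 + \cdots + D_{i-1} = \sum_{j=1}^N t^i_j X_j$, so that $D_i = P_{i+1} - P_i$ for $1 \le i \le N-1$ and $D_N = P_1 - P_N$ by the closing convention $D_N := -(D_1 + \cdots + D_{N-1})$. With this notation the closing condition is immediate, since $\sum_{i=1}^{N} D_i = (P_2 - P_1) + \cdots + (P_N - P_{N-1}) + (P_1 - P_N) = 0$. The substance of the statement is therefore the verification of \eqref{eq:TNEquations}, that is, $X_i = P_i + k_i D_i$ for every $i$.

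Rewriting this in terms of the $P_i$ and using the cyclic convention $P_{N+1} := P_1$, the right-hand side of \eqref{eq:TNEquations} becomes $P_i + k_i D_i = (1-k_i)P_i + k_i P_{i+1} = \sum_{j} \big[(1-k_i)t^i_j + k_i t^{i+1}_j\big] X_j$, where I also set $t^{N+1} := t^1$. Hence \eqref{eq:TNEquations} will follow at once from the purely linear-algebraic identity
\[
(1-k_i)\, t^i + k_i\, t^{i+1} = e_i, \qquad 1 \le i \le N,
\]
with $e_i$ the $i$-th standard basis vector: granting it, the right-hand side above equals $\sum_j \delta_{ij} X_j = X_i$, as required.

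To prove the identity I would pass to the unnormalised vectors $s^i := \xi_i\, t^i = (\mu\lambda_1, \ldots, \mu\lambda_{i-1}, \lambda_i, \ldots, \lambda_N)$, so that $\xi_i = \|s^i\|_1 = \mu(\lambda_1 + \cdots + \lambda_{i-1}) + (\lambda_i + \cdots + \lambda_N)$, all entries being positive by $\mu>1$ and $\lambda_j>0$. Consecutive vectors $s^i$ and $s^{i+1}$ differ only in their $i$-th entry, where $\lambda_i$ is replaced by $\mu\lambda_i$; hence $s^{i+1} - s^i = (\mu-1)\lambda_i\, e_i$ and $\xi_{i+1} - \xi_i = (\mu-1)\lambda_i$. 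Comparing with \eqref{defnk} one recognises $k_i = \xi_{i+1}/((\mu-1)\lambda_i)$, whence $1 - k_i = -\xi_i/((\mu-1)\lambda_i)$. Substituting $t^i = s^i/\xi_i$ and $t^{i+1} = s^{i+1}/\xi_{i+1}$ the normalising factors cancel and the combination collapses to
\[
(1-k_i)\,t^i + k_i\,t^{i+1} = \frac{s^{i+1} - s^i}{(\mu-1)\lambda_i} = \frac{(\mu-1)\lambda_i\, e_i}{(\mu-1)\lambda_i} = e_i .
\]
The one place requiring care is the wrap-around at $i = N$: with the cyclic convention one takes $s^{N+1} := \mu\, s^1$ and $\xi_{N+1} := \mu\, \xi_1$ (so that $t^{N+1} = s^{N+1}/\xi_{N+1} = t^1$ is consistent), and the same computation goes through because $\mu s^1 - s^N = (\mu-1)\lambda_N\, e_N$ and $k_N = \mu\xi_1/((\mu-1)\lambda_N)$ again matches \eqref{defnk}.

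Finally, the positivity $k_i > 1$ is read off termwise from \eqref{defnk} using $\mu > 1$ and $\lambda_i > 0$, and the rank-one bound $\rank{D_i} \le 1$ is precisely where the hypothesis $A_Z^\mu \lambda = 0$ for all $Z \in \mathcal{A}_2$ enters, forcing every $2\times 2$ minor of each $D_i = \sum_j (t^{i+1}_j - t^i_j) X_j$ to vanish; this part is the content inherited from Proposition~\ref{prop:CharacterizationTN}. I expect the main obstacle to be purely organisational, namely keeping the cyclic indexing at $i=N$ consistent between the definition of $D_N$, the formula for $k_N$, and the normalisation $\xi_{N+1} = \mu\xi_1$, since the algebra itself is a one-line cancellation once the relation $k_i = \xi_{i+1}/((\mu-1)\lambda_i)$ has been isolated.
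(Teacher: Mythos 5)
Your argument is correct, and in fact the paper offers no proof of this corollary at all -- it is stated as a known consequence of the characterisation in \cite{SzeLas,GMTDI} -- so your write-up supplies a verification the paper omits. The core of your proof, the identity $(1-k_i)t^i + k_i t^{i+1} = e_i$ obtained by passing to the unnormalised vectors $s^i$ and recognising $k_i = \xi_{i+1}/((\mu-1)\lambda_i)$, $1-k_i = -\xi_i/((\mu-1)\lambda_i)$ (with the wrap-around conventions $s^{N+1}=\mu s^1$, $\xi_{N+1}=\mu\xi_1$), checks out line by line, and it immediately yields $X_i = P_i + k_iD_i$, which is exactly the stated conclusion \eqref{eq:TNEquations}; the closing condition and $k_i>1$ are likewise correctly read off. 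This is essentially the computation underlying \cite[Lemma 3.10]{GMTDI}, which the paper invokes later for the related identity \eqref{e:sumfinale}, so your route is the expected one rather than a genuinely different decomposition.

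One caveat on the final paragraph: the claim that $A_Z^{\mu}\lambda=0$ ``forces every $2\times 2$ minor of each $D_i$ to vanish'' is not a consequence of linearity, since $\det$ on $2\times2$ blocks is quadratic in $D_i = \sum_j(t^{i+1}_j-t^i_j)X_j$; the actual argument expands $\sum_j t^i_j\det(X_j^Z-X_i^Z)$ via the polarisation $\det(A+B)=\det A+\langle(\cofactor A)^T,B\rangle+\det B$ and the already-established relations $X_j^Z = Q^Z+\sum_{l<j}D_l^Z+k_jD_j^Z$, arriving at a linear system in the unknowns $\det(D_j^Z)$ whose nondegeneracy must then be checked. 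This is genuinely the content of the ``if'' direction of Proposition~\ref{prop:CharacterizationTN}, and since the corollary's hypotheses already assume the set induces a $T_N$ configuration and its conclusion is only the system \eqref{eq:TNEquations}, your deferral is logically admissible -- but the phrase ``forcing every minor to vanish'' reads as if it were immediate, which it is not.
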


\begin{remark}\label{rmk:DeterminantFormulaWithTi}
With the vectors $t^i$ defined as in \eqref{eq:DefinitionOfVectorsti}, Proposition~\ref{prop:CharacterizationTN} implies that
\begin{equation}\label{eq:DeterminantFormula}
 \sum_{j = 1}^N t_j^i \det (X_j^Z - X_i^Z) = 0, \quad \forall i \in \{1, \ldots, N\}, \, \forall Z \in \mathcal{A}_2.
\end{equation}
\end{remark}


\section{Inclusion sets in $K_a$}\label{PropKa}
In this section we start our study of subsets $\{ X_1, \ldots, X_N \} \subset \R^{3 \times 2}$ of $K_a$, for $a$ a strictly convex and increasing function with $a(0) = 0$ as in Theorem \ref{teo1}. These properties will then be used to prove our main Theorems \ref{teo1} and \ref{teo2} in the next section. For future reference, we define $Z_{12}, Z_{13} \in \mathcal{A}_2$ as
\[
 Z_{12} = ((1,2), (1,2)) \in \mathcal{A}_2 \quad \text{and} \quad Z_{13} = ((1,3), (1,2)) \in \mathcal{A}_2.
\]

\subsection{General inclusion sets}
We begin by studying general inclusion sets and find that a number of inequalities must be fulfilled in order to have $\{ X_1, \ldots, X_N\} \subset K_a$.
\begin{proposition}
If a set $\{ X_1, \ldots, X_N\} \subset \R^{3 \times 2}$ is contained in $K_a$ and
\[
 X_i =
 \begin{bmatrix}
 a(v_i) & u_i \\
 u_i & v_i \\
 u_i a(v_i) & \frac{1}{2} u_i^2 + F(v_i)
 \end{bmatrix}
 \quad
 \forall i = 1, \ldots, N,
\]
then for all $i,j \in \{ 1, \ldots, N \}$
\begin{equation}\label{eq:ConvexityIncreasingEquation}
 (F(v_j) - F(v_i))(a(v_j) - a(v_i)) < \dfrac{(a(v_j) + a(v_i))(v_j - v_i )(a(v_j) - a(v_i))}{2}
\end{equation}
whenever $v_j \neq v_i$.
\end{proposition}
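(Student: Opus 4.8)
The plan is to observe that, despite its appearance, the asserted inequality is a pure statement about the strictly convex function $a$ and its primitive $F$; neither the $3\times 2$ matrix structure of $K_a$ nor the parameters $u_i$ enter at all. The only data that survive are the two real numbers $v_i\neq v_j$ together with the facts that $a$ is strictly increasing, strictly convex, and that $F'=a$. So the whole statement will reduce to the classical trapezoidal inequality for a strictly convex function.

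First I would record that the inequality is symmetric under the interchange $i\leftrightarrow j$: both sides are invariant, since each factor $F(v_j)-F(v_i)$, $a(v_j)-a(v_i)$, and $v_j-v_i$ changes sign under the swap and the two sides are products of an even number of such factors (after absorbing the overall sign). Hence, without loss of generality, I may assume $v_i < v_j$. Because $a$ is strictly increasing we then have $a(v_j)-a(v_i)>0$, and dividing both sides of the claimed inequality by this positive quantity preserves the strict inequality and its direction. Thus it suffices to prove
\[
F(v_j) - F(v_i) < \frac{\bigl(a(v_j)+a(v_i)\bigr)(v_j - v_i)}{2}.
\]

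Now I would interpret both sides geometrically. Since $F'=a$, the fundamental theorem of calculus gives $F(v_j)-F(v_i)=\int_{v_i}^{v_j} a(\xi)\,d\xi$, the area under the graph of $a$ over $[v_i,v_j]$, while the right-hand side is exactly the area of the trapezoid under the chord joining $(v_i,a(v_i))$ to $(v_j,a(v_j))$. Strict convexity of $a$ yields, for every $\xi\in(v_i,v_j)$, the strict pointwise bound
\[
a(\xi) < a(v_i) + \frac{a(v_j)-a(v_i)}{v_j-v_i}\,(\xi - v_i).
\]
Integrating this strict inequality over $(v_i,v_j)$ — the strictness is preserved because $a$ is continuous and the bound is strict on a set of positive measure — and computing the elementary integral of the affine right-hand side gives precisely
\[
\int_{v_i}^{v_j} a(\xi)\,d\xi < \frac{\bigl(a(v_i)+a(v_j)\bigr)(v_j-v_i)}{2},
\]
which is the reduced claim, and hence the proposition.

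I do not expect a genuine obstacle here: the only real step is recognizing that, after dividing out the positive factor $a(v_j)-a(v_i)$, the statement is nothing more than the strict trapezoidal estimate for convex functions. The single point that warrants a line of care is justifying that the strict pointwise inequality survives integration (it does, by continuity of $a$), and keeping track of the sign of $a(v_j)-a(v_i)$ so that the division does not flip the inequality — both of which are handled once the reduction to $v_i<v_j$ is in place.
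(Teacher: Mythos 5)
Your proof is correct and follows essentially the same route as the paper's: both arguments reduce the claim to the strict trapezoidal (chord) inequality for the strictly convex function $a$ and use monotonicity of $a$ only to control the sign of the factor $a(v_j)-a(v_i)$ (the paper keeps the positive prefactor $(a(v_j)-a(v_i))(v_j-v_i)$ rather than dividing, but this is a cosmetic difference). Your extra care about the symmetry reduction and about strictness surviving integration is sound and matches what the paper does implicitly.
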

\begin{proof}
Let $\{ X_1, \ldots, X_N \} \subset K_a$. Let $v_i \neq v_j$. Then, since $a$ is monotone increasing and strictly convex,
\begin{align*}
(F(v_j) - F(v_i))(a(v_j)-a(v_i)) &= (a(v_j)-a(v_i))\int_{v_i}^{v_j} a(t) \, dt \\
&= (a(v_j)-a(v_i))(v_j - v_i) \int_0^1 a(v_i + s(v_j - v_i)) \, ds \\
&= (a(v_j)-a(v_i))(v_j - v_i) \int_0^1 a(s v_j + (1-s) v_i)) \, ds \\ 
&<(a(v_j)-a(v_i))(v_j - v_i) \int_0^1 [s a(v_j) + (1-s) a(v_i)] \, ds \\
& = \dfrac{(a(v_j) + a(v_i))(v_j - v_i)(a(v_j)-a(v_i))}{2}.
\end{align*}
\end{proof}

\subsection{Properties of $T_N$ configurations contained in $K_a$}
We now turn our attention to the particular case in which $\{ X_1, \ldots, X_N \}$ induces a $T_N$ configuration.

\begin{proposition}\label{prop:NotAllTheSame}
If a set $\{ X_1, \ldots, X_N \} \subset K_a$ induces a $T_N$ configuration with
\[
 X_i =
 \begin{bmatrix}
 a(v_i) & u_i \\
 u_i & v_i \\
 u_i a(v_i) & \frac{1}{2} u_i^2 + F(v_i)
 \end{bmatrix}
 \quad
 \forall i = 1, \ldots, N.
\]
Then there must exist $i \neq j$ such that $v_i \neq v_j$.
\end{proposition}
\begin{proof}
For a contradiction, assume that all $v_i$ take the same value. Remark~\ref{rmk:DeterminantFormulaWithTi} yields
\begin{align*}
 0 = \sum_{j = 1}^{N} t^i_j \det (X_j^{Z_{12}} - X_i^{Z_{12}}) &= \sum_{j = 1}^{N} t^i_j [\underbrace{(a(v_j) - a(v_i))(v_j - v_i)}_{= 0} - (u_j - u_i)^2] \\
 &= - \sum_{j = 1}^{N} t^i_j (u_j - u_i)^2.
\end{align*}
This means that all $u_i$ take the same value and therefore all $X_i$ are identical. This contradicts our definition of $T_N$ configurations.
\end{proof}
Before going further, we introduce the following notation. If $\{ X_1, \ldots, X_N \}$ induces a $T_N$ configuration with
\begin{equation}\label{formX}
X_i = Q + D_1 + \dots + D_{i-1} + k_iD_i, \quad \rank{D_i} \le 1, \quad \sum_\ell D_\ell = 0,
\end{equation}
 then we define  $\{ Y_1, \ldots, Y_N \}$ as 
\begin{equation}\label{eq:RestrictionToTheUpperSquareMatrix}
 Y_j = X_j^{Z_{12}}, C_j = D_j^{Z_{12}} \quad \forall j = 1, \ldots, N,\quad \text{ and } P = Q^{Z_{12}}.
\end{equation}
We will write
\begin{equation}\label{PL}
P_0 = P_1 = P, P_\ell := P + C_1 + \dots + C_{\ell - 1}, \quad\forall \ell \ge 1.
\end{equation}
In other words, this operation simply consists in restricting our attention to the first two lines of the $3\times 2$ matrices $X_i$. Notice that, by \eqref{eq:ComputingTheRankOneMatrices}, it follows that $\{ C_1, \ldots, C_N \} \subset \R^{2 \times 2}_{\text{sym}}$. We use the above notation throughout the remainder of this paper.

\begin{lemma}\label{lemma:FirstLemmaStarQuantity}
Let $\{ X_1, \ldots, X_N \} \subset K_a$ induce a $T_N$ configuration of the form \eqref{formX}. Then, in the notation above and for $\xi_i$ as in \eqref{eq:DefinitionOfVectorsti}, we have
 \begin{align*}
  (\star)_i &:= \xi_i \sum_{j = 1}^{N} t^i_j (X_j)_{11} \det(X_j^{Z_{12}} - X_i^{Z_{12}}) = \xi_i \sum_{j = 1}^{N} t^i_j (Y_j)_{11} \det(Y_j - Y_i) > 0 \quad \forall  i = 1, \ldots, N.
 \end{align*}
\end{lemma}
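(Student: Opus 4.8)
The plan is to express everything through the $2\times 2$ minors of Remark~\ref{rmk:DeterminantFormulaWithTi}, applied to the two index pairs $Z_{12}$ and $Z_{13}$, and to combine the two resulting determinant identities with the pointwise convexity inequality \eqref{eq:ConvexityIncreasingEquation}. First I set $w_j := \xi_i t^i_j$; by \eqref{eq:DefinitionOfVectorsti} these are, up to the common positive factor $\xi_i$, the numbers $\mu\lambda_1,\dots,\mu\lambda_{i-1},\lambda_i,\dots,\lambda_N$, hence $w_j>0$ for every $j$. Writing $\alpha_j := a(v_j)-a(v_i)$, $\beta_j := v_j-v_i$, $\gamma_j := (u_j-u_i)^2$ and $\Phi_j := F(v_j)-F(v_i)$, I have $\det(Y_j-Y_i)=\alpha_j\beta_j-\gamma_j$, and multiplying \eqref{eq:DeterminantFormula} for $Z_{12}$ by $\xi_i$ gives the identity $\sum_{j=1}^N w_j(\alpha_j\beta_j-\gamma_j)=0$. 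Since $(Y_j)_{11}=a(v_j)=\alpha_j+a(v_i)$, this $Z_{12}$ identity lets me discard the constant part $a(v_i)$ and reduces the target to proving
\[
(\star)_i=\sum_{j=1}^N w_j\,\alpha_j(\alpha_j\beta_j-\gamma_j)>0.
\]

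The key computation is the $Z_{13}$ determinant. Expanding $\det(X_j^{Z_{13}}-X_i^{Z_{13}})$ and factoring the $u$-dependent terms by $(u_j-u_i)$, I expect the $u$-part to collapse to $-\tfrac12(a(v_j)+a(v_i))\gamma_j$, yielding the clean form $\det(X_j^{Z_{13}}-X_i^{Z_{13}})=\alpha_j\Phi_j-\tfrac12(a(v_j)+a(v_i))\gamma_j$. The $Z_{13}$ identity from Remark~\ref{rmk:DeterminantFormulaWithTi} then reads $\sum_j w_j\alpha_j\Phi_j=\tfrac12\sum_j w_j(a(v_j)+a(v_i))\gamma_j$.

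With both identities available, I introduce the auxiliary quantity
\[
T:=\sum_{j=1}^N w_j\Big(\alpha_j\Phi_j-\tfrac12(a(v_j)+a(v_i))\alpha_j\beta_j\Big).
\]
On one hand, by \eqref{eq:ConvexityIncreasingEquation} each summand with $v_j\neq v_i$ is strictly negative, while each summand with $v_j=v_i$ vanishes (there $\alpha_j=\beta_j=\Phi_j=0$); since $w_j>0$ and, by Proposition~\ref{prop:NotAllTheSame}, at least one index $j$ satisfies $v_j\neq v_i$ (otherwise all $v_\ell$ would coincide), I conclude $T<0$. On the other hand, substituting the $Z_{13}$ identity into the first sum of $T$ turns it into $T=-\tfrac12\sum_j w_j(a(v_j)+a(v_i))(\alpha_j\beta_j-\gamma_j)$; splitting $a(v_j)+a(v_i)$ and using the $Z_{12}$ identity to annihilate the $a(v_i)$ part leaves exactly $T=-\tfrac12(\star)_i$. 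Combining $T<0$ with $T=-\tfrac12(\star)_i$ gives $(\star)_i>0$.

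The main obstacle is locating the correct linear combination: the quantity $T$ must be simultaneously manifestly nonpositive through the pointwise inequality \eqref{eq:ConvexityIncreasingEquation} and collapsible, via the two determinant identities, to a scalar multiple of $(\star)_i$. Recognizing the weight $\tfrac12(a(v_j)+a(v_i))$ as the one that makes both the $Z_{13}$ determinant simplification and the convexity inequality align is the crux; once $\det(X_j^{Z_{13}}-X_i^{Z_{13}})$ is reduced to $\alpha_j\Phi_j-\tfrac12(a(v_j)+a(v_i))\gamma_j$, everything else is bookkeeping with the $Z_{12}$ and $Z_{13}$ identities.
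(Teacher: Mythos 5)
Your proposal is correct and follows essentially the same route as the paper's proof: both hinge on the $Z_{13}$ minor identity from Remark~\ref{rmk:DeterminantFormulaWithTi}, the algebraic collapse of the $u$-dependent terms to $-\tfrac12(a(v_j)+a(v_i))(u_j-u_i)^2$, the strict convexity inequality \eqref{eq:ConvexityIncreasingEquation} (with Proposition~\ref{prop:NotAllTheSame} guaranteeing at least one strictly negative summand), and the $Z_{12}$ identity to discard the constant $a(v_i)$ part. Your auxiliary quantity $T$ is just a repackaging of the paper's chain of (in)equalities, with $T=-\tfrac12(\star)_i$.
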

Let us point out that the reason for which we define the quantity $(\star)_i$ with a factor $\xi_i > 0$ is purely practical. Indeed, it is only useful to obtain a cleaner expression in the forthcoming \eqref{eq:FinalExpressionOfTheStarQuantity}.
\begin{proof}
By Remark~\ref{rmk:DeterminantFormulaWithTi} we have 
\begin{align*}
 0 &= \sum_{j = 1}^N t_j^i \det (X_j^{Z_{13}} - X_i^{Z_{13}}) \\
 &= \sum_{j = 1}^N t_j^i \left[(a(v_j) - a(v_i))\left(\frac{1}{2} u_j^2 - \frac{1}{2} u_i^2 + F(v_j) - F(v_i)\right) - (u_j - u_i)(u_j a(v_j) - u_i a(v_i))\right] \\
 &= \sum_{j = 1}^N t_j^i (a(v_j) - a(v_i))\left(F(v_j) - F(v_i)\right) \\
 & \qquad + \sum_{j = 1}^N t_j^i \underbrace{\left[(a(v_j) - a(v_i))\left(\frac{1}{2} u_j^2 - \frac{1}{2} u_i^2 \right) - (u_j - u_i)(u_j a(v_j) - u_{i} a(v_i))\right]}_{\qquad =: \Phi_{i,j}} \\
\end{align*}
Firstly, by \eqref{eq:ConvexityIncreasingEquation} combined with the fact that all $v_j$ are not identical, see Proposition~\ref{prop:NotAllTheSame}, we deduce
\begin{equation}\label{eq:StarQuantityLemmaConvexityIncreasingPart}
 \sum_{j = 1}^N t_j^i (a(v_j) - a(v_i))\left(F(v_j) - F(v_i)\right) < \sum_{j = 1}^N t_j^i \dfrac{(a(v_j) + a(v_i))(v_j - v_i )(a(v_j) - a(v_i))}{2}.
\end{equation}
Secondly, we compute the quantities $\Phi_{i,j}$ for all $i$ and $j$.
We have, by direct computation,
\begin{align}
\begin{split} \label{eq:ComputationsQuantityPhi}
\Phi_{i,j} &= - \dfrac{1}{2}(u_j - u_i)^2 (a(v_j) + a(v_i)).
\end{split}
\end{align}
Then by combining \eqref{eq:StarQuantityLemmaConvexityIncreasingPart} and \eqref{eq:ComputationsQuantityPhi}, we find
\begin{align*}
 0 &= \sum_{j = 1}^N t_j^i \det (X_j^{Z_{13}} - X_i^{Z_{13}}) \\
 &< \dfrac{1}{2} \sum_{j = 1}^N t_j^i (a(v_j) + a(v_i))(v_j - v_i )(a(v_j) - a(v_i)) - \dfrac{1}{2} \sum_{j = 1}^N t_j^i (u_j - u_i)^2 (a(v_j) + a(v_i)) \\
 &= \dfrac{1}{2} \sum_{j = 1}^N t_j^i (a(v_j) + a(v_i))( (v_j - v_i )(a(v_j) - a(v_i)) - (u_j - u_i)^2 ) \\
 &= \dfrac{1}{2} \sum_{j = 1}^N t_j^i (a(v_j) + a(v_i)) \det (X_j^{Z_{12}} - X_i^{Z_{12}}). 
\end{align*}
Since by Remark~\ref{rmk:DeterminantFormulaWithTi} we have
\[
 \sum_{j = 1}^N t_j^i \det (X_j^{Z_{12}} - X_i^{Z_{12}}) = 0,
\]
we deduce that
\[
 \sum_{j = 1}^N t_j^i a(v_j) \det (X_j^{Z_{12}} - X_i^{Z_{12}}) > 0.
\]
Equivalently, the latter can be rewritten as
\[
 0 < \sum_{j = 1}^N t_j^i (X_j)_{11} \det (X_j^{Z_{12}} - X_i^{Z_{12}}) = \sum_{j = 1}^N t_j^i (Y_j)_{11} \det (Y_j - Y_i),
\]
which concludes the proof.
\end{proof}

The next proposition, whose proof is postponed to the end of the section, gives us a new formulation of $(\star)_i$.

\begin{lemma}\label{eq:IntermediaryExpressionOfStar}
With the notation introduced in \eqref{eq:RestrictionToTheUpperSquareMatrix} and \eqref{PL}, we have
\[
 (\star)_i = \xi_i \sum_{\alpha = 1}^N k_{\alpha} (k_{\alpha} - 1) t^i_{\alpha} (C_{\alpha})_{11} \langle \cofactor C_{\alpha}, P_{\alpha} - Y_i \rangle \quad \forall i = 1, \ldots, N.
\]
\end{lemma}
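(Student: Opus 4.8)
\emph{Plan.} Fix $i$. The decisive structural facts are that every $2\times 2$ matrix appearing here ($Y_j$, $C_\alpha$, $P_\ell$, and hence $P_\ell - Y_i$) is symmetric by \eqref{eq:RestrictionToTheUpperSquareMatrix} and \eqref{PL}, that $\cofactor$ is linear on $\R^{2\times 2}$, that $\det C_\alpha = 0$ since $\rank{C_\alpha} \le 1$, and that on symmetric matrices the polarized determinant $\langle \cofactor A, B\rangle$ is symmetric in $A,B$ and satisfies $\det(A+B) = \det A + \langle \cofactor A, B\rangle + \det B$. I would first introduce the two scalar sequences $g_\ell := (P_\ell)_{11}$ and $h_\ell := \det(P_\ell - Y_i)$. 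From $P_{\ell+1} = P_\ell + C_\ell$ and $\det C_\ell = 0$ one obtains the forward-difference identities $(C_\ell)_{11} = g_{\ell+1}-g_\ell$ and $\langle \cofactor C_\ell, P_\ell - Y_i\rangle = h_{\ell+1}-h_\ell$, while $Y_\ell = P_\ell + k_\ell C_\ell$ gives $(Y_\ell)_{11} = g_\ell + k_\ell(g_{\ell+1}-g_\ell)$ and, using $\det C_\ell = 0$ once more, $\det(Y_\ell - Y_i) = h_\ell + k_\ell(h_{\ell+1}-h_\ell)$.

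Next I would substitute these into the expression for $(\star)_i$ from Lemma~\ref{lemma:FirstLemmaStarQuantity} and into the claimed right-hand side. Writing $m_\ell := g_\ell h_\ell$, a direct expansion together with the discrete Leibniz rule $g_j(h_{j+1}-h_j) + h_j(g_{j+1}-g_j) + (g_{j+1}-g_j)(h_{j+1}-h_j) = m_{j+1}-m_j$ yields
\[
 \xi_i^{-1}(\star)_i - \sum_{\alpha=1}^N k_\alpha(k_\alpha-1)t^i_\alpha (C_\alpha)_{11}\langle \cofactor C_\alpha, P_\alpha - Y_i\rangle = \sum_{j=1}^N t^i_j\bigl(m_j + k_j(m_{j+1}-m_j)\bigr).
\]
Thus the lemma is equivalent to showing that the scalar sum on the right vanishes.

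To prove $\sum_{j} t^i_j\bigl(m_j + k_j(m_{j+1}-m_j)\bigr)=0$, I would first observe that the sequence is cyclic: by the closing condition in \eqref{formX} one has $\sum_\ell C_\ell = (\sum_\ell D_\ell)^{Z_{12}} = 0$, hence $P_{N+1}=P_1$ and $m_{N+1}=m_1$. Reindexing the term $\sum_j t^i_j k_j m_{j+1}$ and collecting the coefficient of each $m_j$, that coefficient equals $t^i_{j-1}k_{j-1} - t^i_j(k_j-1)$, with indices read cyclically and the convention $t^i_0 k_0 := t^i_N k_N$. Using \eqref{eq:DefinitionOfVectorsti} and \eqref{defnk}—setting $s_\ell := \lambda_1+\dots+\lambda_\ell$ and $\Lambda := s_N$, so that $k_\ell = \tfrac{\Lambda + (\mu-1)s_\ell}{(\mu-1)\lambda_\ell}$ and $k_\ell - 1 = \tfrac{\Lambda + (\mu-1)s_{\ell-1}}{(\mu-1)\lambda_\ell}$—a short case check ($j>i$, $j<i$, and the wrap-around $j=1$, distinguishing $t^i_\ell = \lambda_\ell/\xi_i$ for $\ell\ge i$ from $t^i_\ell = \mu\lambda_\ell/\xi_i$ for $\ell<i$) shows $t^i_{j-1}k_{j-1} = t^i_j(k_j-1)$, so the coefficient of $m_j$ vanishes, for every $j\ne i$. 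The remaining index $j=i$ is harmless: since $P_i - Y_i = -k_i C_i$, we get $h_i = \det(-k_iC_i) = k_i^2\det C_i = 0$, whence $m_i = 0$. The sum therefore collapses to its $j=i$ term, which is zero.

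The main obstacle is the bookkeeping in the middle step: recognizing that both $(Y_j)_{11}$ and $\det(Y_j - Y_i)$ have the form ``value plus $k_j$ times forward difference'' of the scalar sequences $g$ and $h$, so that the discrepancy between the two sides telescopes into $\sum_j t^i_j\bigl(m_j + k_j(m_{j+1}-m_j)\bigr)$. Once this reduction is achieved, the only genuinely arithmetic input is the cyclic relation $t^i_{j-1}k_{j-1} = t^i_j(k_j-1)$ for $j\ne i$ between the weights $t^i$ and the coefficients $k$, combined with the free vanishing $m_i=0$ coming from $P_i - Y_i = -k_iC_i$.
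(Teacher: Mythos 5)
Your proof is correct, and it takes a genuinely different route from the paper's. The paper first translates the configuration so that $P=0$, then polarizes $\det(Y_j-Y_i)$ with respect to the pair $(Y_j,Y_i)$, splitting $\xi_i^{-1}(\star)_i$ into three sums $I-II+III$; the sums $I$ and $II$ are then evaluated by the general bilinear summation identity of Lemma~\ref{l:linear} (applied to $x_j=(Y_j)_{11}$ and $y_j=\det Y_j$, resp.\ $y_j=\langle\cofactor Y_j,Y_i\rangle$), and the leftover terms are shown to cancel by a separate telescoping computation. You instead polarize with respect to the pair $(P_j-Y_i,\,C_j)$, which writes both factors $(Y_j)_{11}$ and $\det(Y_j-Y_i)$ in the common form $a_j+k_j(a_{j+1}-a_j)$ for the scalar sequences $g_j=(P_j)_{11}$ and $h_j=\det(P_j-Y_i)$; the discrete Leibniz rule then isolates exactly the claimed right-hand side plus the single weighted sum $\sum_j t^i_j\bigl(m_j+k_j(m_{j+1}-m_j)\bigr)$ with $m_j=g_jh_j$, which you eliminate by summation by parts using the cyclic identity $t^i_{j-1}k_{j-1}=t^i_j(k_j-1)$ for $j\neq i$ (a direct consequence of \eqref{eq:DefinitionOfVectorsti} and \eqref{defnk}) together with $m_i=(P_i)_{11}\det(-k_iC_i)=0$. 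I checked the two delicate points: the wrap-around coefficient of $m_1$ does vanish when $i>1$ (both $t^i_Nk_N$ and $t^i_1(k_1-1)$ equal $\mu(\sum_\ell\lambda_\ell)/((\mu-1)\xi_i)$), and at $j=i$ the coefficient does not vanish but multiplies $m_i=0$. Your argument is more self-contained --- it bypasses Lemma~\ref{l:linear} entirely and makes the role of the weights $t^i$ and $k_j$ transparent as a single Abel-summation identity --- whereas the paper's route factors the computation through a general-purpose quadratic summation formula inherited from \cite{GMTDI}, at the cost of an extra shift to $P=0$ and a final cancellation step \eqref{sumzero:fine}.
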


Before stating the final result of this section, we introduce some more notation. Let $\{ X_1, \ldots, X_N \}$ be a $T_N$ configuration. As above, let $\{ Y_1 , \ldots, Y_N \}$ be the $T_N$ configuration defined by \eqref{eq:RestrictionToTheUpperSquareMatrix} and $\{C_1, \ldots, C_N\}$ the rank-one matrices of this $T_N$ configuration. Recall also the definitions of $k_i$ of Definition \ref{deftn} and of $\lambda_i$ and $\mu$ of Proposition \ref{prop:CharacterizationTN}. We define $S_{\beta}$ and $T_{\beta}$ for all $\beta = 1, \ldots, N$ as 
\[
S_1:= 0 \text{ and } S_{\beta} = \sum_{\alpha = 1}^{\beta - 1} k_{\alpha} (k_{\alpha} - 1) \lambda_\alpha (C_{\alpha})_{11} \cofactor C_{\alpha}, \quad \forall 2 \le \beta \le N
\]
and
\[
T_N:= 0 \text{ and } T_{\beta} = \sum_{\alpha = \beta+1}^{N} k_{\alpha} (k_{\alpha} - 1) \lambda_\alpha (C_{\alpha})_{11} \cofactor C_{\alpha}, \quad \forall 1 \le \beta \le N-1.
\]
We can now prove the following proposition, which is the main result of this section.
\begin{proposition}
Let $\{ X_1, \ldots, X_N \} \subset \R^{3 \times 2}$. In the notation above, we have
\begin{equation}\label{eq:FinalExpressionOfTheStarQuantity}
 (\star)_i = - \mu \sum_{\beta = 1}^{i-1} \langle S_\beta, C_\beta \rangle + \sum_{\beta = i+1}^{N} \langle T_\beta, C_\beta \rangle - \mu k_i \langle S_i, C_i \rangle + (1 - k_i) \langle T_i, C_i \rangle.
\end{equation}
\end{proposition}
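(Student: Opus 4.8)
The plan is to start from the formula for $(\star)_i$ given by Lemma~\ref{eq:IntermediaryExpressionOfStar}, expand every pairing $\langle \cofactor C_\alpha, P_\alpha - Y_i\rangle$ into a double sum of the elementary pairings $\langle \cofactor C_\alpha, C_\gamma\rangle$, and then recognise the four groups of terms on the right-hand side of \eqref{eq:FinalExpressionOfTheStarQuantity} after a careful reindexing. First I would make the coefficients explicit: by \eqref{eq:DefinitionOfVectorsti} one has $\xi_i t^i_\alpha = \mu\lambda_\alpha$ for $\alpha < i$ and $\xi_i t^i_\alpha = \lambda_\alpha$ for $\alpha \ge i$. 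Abbreviating $w_\alpha := k_\alpha(k_\alpha-1)\lambda_\alpha (C_\alpha)_{11}\cofactor C_\alpha$, so that $S_\beta = \sum_{\alpha=1}^{\beta-1} w_\alpha$ and $T_\beta = \sum_{\alpha=\beta+1}^N w_\alpha$, Lemma~\ref{eq:IntermediaryExpressionOfStar} turns into
\[
(\star)_i = \mu\sum_{\alpha=1}^{i-1}\langle w_\alpha, P_\alpha - Y_i\rangle + \sum_{\alpha=i}^{N}\langle w_\alpha, P_\alpha - Y_i\rangle.
\]

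Next I would insert the decomposition of $P_\alpha - Y_i$ coming from \eqref{formX} and \eqref{PL}. Since $Y_i = P_i + k_i C_i$, writing $P_\alpha - Y_i = (P_\alpha - P_i) - k_i C_i$ and expanding $P_\alpha - P_i$ as a signed telescoping sum of the $C_\gamma$ yields three cases: $P_\alpha - Y_i = -\sum_{\gamma=\alpha}^{i-1}C_\gamma - k_i C_i$ for $\alpha < i$; $P_\alpha - Y_i = -k_i C_i$ for $\alpha = i$; and $P_\alpha - Y_i = \sum_{\gamma=i+1}^{\alpha-1}C_\gamma + (1-k_i)C_i$ for $\alpha > i$. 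Substituting these and first collecting the summands that pair $w_\alpha$ with $C_i$, one obtains $-\mu k_i\sum_{\alpha<i}\langle w_\alpha, C_i\rangle + (1-k_i)\sum_{\alpha>i}\langle w_\alpha, C_i\rangle = -\mu k_i\langle S_i, C_i\rangle + (1-k_i)\langle T_i, C_i\rangle$, which are exactly the last two summands of \eqref{eq:FinalExpressionOfTheStarQuantity}; the $\alpha = i$ contribution $-k_i\langle w_i, C_i\rangle$ drops out because $C_i$ has rank one, so $\langle \cofactor C_i, C_i\rangle = 2\det C_i = 0$.

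It then remains to match the pairings of $w_\alpha$ with $C_\gamma$ for $\gamma\neq i$, namely $-\mu\sum_{\alpha<i}\sum_{\gamma=\alpha}^{i-1}\langle w_\alpha, C_\gamma\rangle + \sum_{\alpha>i}\sum_{\gamma=i+1}^{\alpha-1}\langle w_\alpha, C_\gamma\rangle$, with the first two summands of \eqref{eq:FinalExpressionOfTheStarQuantity}. For the first block I would interchange the order of summation: the index set $\{\,\alpha \le \gamma \le i-1\,\}$ (whose diagonal $\gamma=\alpha$ contributes nothing, again since $\det C_\alpha = 0$) coincides, after renaming $\gamma = \beta$, with $\{\,\alpha < \beta \le i-1\,\}$, so that $-\mu\sum_{\alpha<\gamma\le i-1}\langle w_\alpha, C_\gamma\rangle = -\mu\sum_{\beta=1}^{i-1}\big\langle \sum_{\alpha<\beta} w_\alpha, C_\beta\big\rangle = -\mu\sum_{\beta=1}^{i-1}\langle S_\beta, C_\beta\rangle$. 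The identical reindexing applied to the index set $\{\,i < \gamma < \alpha\,\}$ of the second block gives $\sum_{\beta=i+1}^{N}\big\langle \sum_{\alpha>\beta} w_\alpha, C_\beta\big\rangle = \sum_{\beta=i+1}^{N}\langle T_\beta, C_\beta\rangle$. Assembling the four contributions produces precisely \eqref{eq:FinalExpressionOfTheStarQuantity}.

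The whole argument is elementary and I do not anticipate a conceptual obstacle; the only genuine care needed lies in the bookkeeping of the nested index ranges through the interchange of summation order, in keeping the factor $\mu$ attached exactly to those pairs whose two indices both lie below $i$, and in invoking the rank-one identity $\det C_\alpha = 0$ to discard the diagonal terms so that the strict inequalities in the definitions of $S_\beta$ and $T_\beta$ are respected.
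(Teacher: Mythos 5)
Your proposal is correct and follows exactly the paper's route: starting from Lemma~\ref{eq:IntermediaryExpressionOfStar}, expanding $P_\alpha - Y_i$ in the rank-one matrices $C_\gamma$, discarding the diagonal terms via $\langle \cofactor C_\alpha, C_\alpha\rangle = 2\det C_\alpha = 0$, and permuting the order of summation to produce $S_\beta$ and $T_\beta$. The only difference is that you spell out the three case distinctions for $P_\alpha - Y_i$ and the reindexing that the paper leaves implicit, and all of these details check out.
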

\begin{proof}
By Lemma~\ref{eq:IntermediaryExpressionOfStar}, 
\[
 (\star)_i = \xi_i \sum_{\alpha = 1}^N k_{\alpha} (k_{\alpha} - 1) t^i_{\alpha} (C_{\alpha})_{11} \langle \cofactor C_{\alpha}, P_{\alpha} - Y_i \rangle \quad \forall i = 1, \ldots, N.
\]
By computing the quantities $P_{\alpha} - Y_i$ in terms of rank-one matrices $C_1, \ldots, C_N$ and exploiting the fact that, for all $1 \le \alpha \le N$,
\[
\langle \cofactor C_\alpha, C_\alpha\rangle = 2\det(C_\alpha) = 0,
\]
we find
\begin{align*}
 (\star)_i &= - \mu \sum_{\alpha = 1}^{i-1} \sum_{\beta = \alpha + 1}^{i-1} k_{\alpha} (k_{\alpha} - 1) \lambda_{\alpha} (C_{\alpha})_{11} \langle \cofactor C_{\alpha}, C_{\beta} \rangle \\
 & \qquad - \mu k_i \sum_{\alpha = 1}^{i-1} k_{\alpha} (k_{\alpha} - 1) \lambda_{\alpha} (C_{\alpha})_{11} \langle \cofactor C_{\alpha}, C_i \rangle \\
 & \qquad + (1 - k_i) \sum_{\alpha = i+1}^{N} k_{\alpha} (k_{\alpha} - 1) \lambda_{\alpha} (C_{\alpha})_{11} \langle \cofactor C_{\alpha}, C_i \rangle \\
 & \qquad + \sum_{\alpha = i+1}^{N} \sum_{\beta = i + 1}^{\alpha - 1} k_{\alpha} (k_{\alpha} - 1) \lambda_{\alpha} (C_{\alpha})_{11} \langle \cofactor C_{\alpha}, C_{\beta} \rangle.
\end{align*}
By permuting the sums, the quantities $S_\beta$ and $T_\beta$ appear and we find \eqref{eq:FinalExpressionOfTheStarQuantity}.
\end{proof}

\begin{remark}
We make some observations about the quantities $S_{\beta}$ and $T_{\beta}$. We see that all the matrices in the sums defining $S_{\beta}$ and $T_{\beta}$ are symmetric positive semi-definite. Moreover, by the definitions of $S_{\beta}$ and $T_{\beta}$ we have
\begin{equation}
 S_{\beta} \leq S_{\beta+1} \quad \text{and} \quad T_{\beta} \geq T_{\beta+1}, \quad \forall \beta \in \{ 1, \ldots, N-1 \}.
\end{equation}
Finally, we also notice that $S_1 = 0$, $T_N = 0$ and since $\langle \cofactor C_j , C_j \rangle = 2\det(C_j) = 0$ for all $j$, we deduce that
\begin{equation}
 \langle S_{\beta} , C_{\beta} \rangle = \langle S_{\beta+1} , C_{\beta} \rangle \quad \text{and} \quad \langle T_{\beta} , C_{\beta} \rangle = \langle T_{\beta - 1} , C_{\beta} \rangle, \quad \forall \beta \in \{ 1, \ldots, N-1 \}.
\end{equation}
\end{remark}
We finish this section by proving Lemma~\ref{eq:IntermediaryExpressionOfStar}. In the proof we will need the following technical result.

\begin{lemma}\label{l:linear}
Assume the real numbers $\mu>1$, $\lambda_1, \ldots , \lambda_N >0$ and $k_1, \ldots , k_N >1$ are linked by the formulas \eqref{defnk}. Assume $p, x_j, c_j$ are numbers satisfying the  relations
\begin{align*}
x_i &= p + c_1 + \ldots + c_{i-1} + k_i c_i\\
0 &= c_1+ \ldots + c_N\, .
\end{align*}
If we define the vectors $t^i$ as in \eqref{eq:DefinitionOfVectorsti}, then, for any $1 \le i \le N$,
\begin{equation}\label{e:sumfinale}
\sum_j t^i_j x_j = p + c_1 + \ldots + c_{i-1}\, .
\end{equation}
Furthermore,  if $q,y_j,d_j$ are numbers again fulfilling, for all $1\le i \le N$,
\begin{align*}
y_i &= q + d_1 + \ldots + d_{i-1} + k_i d_i\\
0 &= d_1+ \ldots + d_N\,,
\end{align*}
then
\begin{equation}\label{e:sumfinalquad}
\sum_j t^i_j x_jy_j = \sum_{j}k_j(k_j-1)t^i_jc_jd_j + \sum_{j,k}^{i-1}c_jd_k  + pq + p\sum_{j = 1}^{i-1}d_j + q\sum_{j = 1}^{i-1}c_j
\end{equation}
\end{lemma}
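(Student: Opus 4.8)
The plan is to reduce both identities to a single elementary coefficient identity. Set $\sigma_j := c_1 + \cdots + c_{j-1}$ and $\tau_j := d_1 + \cdots + d_{j-1}$ for $1 \le j \le N+1$, so that the closing conditions give the boundary values $\sigma_1 = \tau_1 = 0$ and $\sigma_{N+1} = \tau_{N+1} = 0$. Since $c_j = \sigma_{j+1} - \sigma_j$, the relation $x_j = p + \sigma_j + k_j c_j$ becomes the affine interpolation
\[
x_j - p = (1 - k_j)\sigma_j + k_j \sigma_{j+1},
\]
and likewise $y_j - q = (1-k_j)\tau_j + k_j\tau_{j+1}$. Writing $\epsilon^i_r = \mu$ for $r < i$ and $\epsilon^i_r = 1$ for $r \ge i$, so that $t^i_r = \epsilon^i_r \lambda_r/\xi_i$, the whole argument hinges on the claim that the coefficients
\[
w_r := t^i_r(1 - k_r) + t^i_{r-1} k_{r-1}, \qquad 2 \le r \le N,
\]
satisfy $w_r = \delta_{r,i}$.

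Granting this, I would first obtain \eqref{e:sumfinale}. Multiplying the affine form of $x_j - p$ by $t^i_j$, summing, and reindexing the second sum by $r = j+1$ gives
\[
\sum_j t^i_j (x_j - p) = t^i_1(1-k_1)\sigma_1 + \sum_{r=2}^N w_r \sigma_r + t^i_N k_N \sigma_{N+1}.
\]
Both boundary terms vanish since $\sigma_1 = \sigma_{N+1} = 0$, and the claim collapses the middle sum to $\sigma_i$. As $\sum_j t^i_j = 1$, this is exactly $\sum_j t^i_j x_j = p + \sigma_i = p + c_1 + \cdots + c_{i-1}$.

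For \eqref{e:sumfinalquad} I would expand $x_j y_j = \big(p + (x_j - p)\big)\big(q + (y_j - q)\big)$ and use bilinearity. The contributions $pq$, $p(y_j-q)$ and $q(x_j - p)$ are resolved by \eqref{e:sumfinale} applied to each sequence separately, producing precisely $pq + p\sum_{j=1}^{i-1} d_j + q\sum_{j=1}^{i-1} c_j$. The genuinely quadratic term $\sum_j t^i_j (x_j-p)(y_j-q)$ is treated through the pointwise identity
\[
(x_j - p)(y_j - q) - k_j(k_j-1)c_j d_j = (1-k_j)\sigma_j\tau_j + k_j\sigma_{j+1}\tau_{j+1},
\]
obtained by inserting the affine forms and simplifying with $c_j = \sigma_{j+1}-\sigma_j$, $d_j = \tau_{j+1}-\tau_j$ and $k_j - 1 = -(1-k_j)$. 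Summing against $t^i_j$, the quantities $\sigma_r\tau_r$ now play the role of $\sigma_r$ above: the same reindexing and the same claim, together with the vanishing boundary values $\sigma_1\tau_1 = \sigma_{N+1}\tau_{N+1} = 0$, collapse the telescoped sum to $\sigma_i\tau_i = \sum_{j,k}^{i-1} c_j d_k$, while the term $\sum_j k_j(k_j-1)t^i_j c_j d_j$ survives. Collecting all contributions yields \eqref{e:sumfinalquad}.

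The main obstacle, and the only step using the precise form \eqref{defnk}, is the coefficient claim $w_r = \delta_{r,i}$. I would first read off from \eqref{defnk} the two identities $\lambda_\ell k_\ell = \Lambda_\ell + \rho$ and $\lambda_\ell(1-k_\ell) = -\Lambda_{\ell-1} - \rho$, where $\Lambda_\ell = \lambda_1 + \cdots + \lambda_\ell$ and $\rho = \Lambda_N/(\mu-1)$. Substituting into $w_r$ and factoring gives the compact form
\[
w_r = \frac{1}{\xi_i}\,(\Lambda_{r-1} + \rho)\,(\epsilon^i_{r-1} - \epsilon^i_r).
\]
Since $\epsilon^i_{r-1} = \epsilon^i_r$ for every $r \ne i$, this immediately yields $w_r = 0$ off the diagonal; at $r = i$ the jump $\epsilon^i_{i-1} - \epsilon^i_i = \mu - 1$ combined with $\xi_i = (\mu-1)\Lambda_{i-1} + \Lambda_N$ gives $w_i = 1$. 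The degenerate case $i = 1$ is consistent because then $\sigma_i = 0$. Apart from this computation, the only care required is the index bookkeeping at the boundaries $r = 1, N+1$ and at the switch $r = i$ of $\epsilon^i$.
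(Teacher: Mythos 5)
Your argument is correct, and it takes a genuinely different route from the paper. The paper cites \eqref{e:sumfinale} from the literature and proves \eqref{e:sumfinalquad} by expanding $\sum_j \lambda_j x_j y_j$ as a double sum $\sum_{k,j} g_{kj} c_k d_j$, using \eqref{defnk} to show that the off-diagonal coefficients $g_{kj}$ are all equal to $\mu/(\mu-1)$, killing that constant part via $\sum_\ell c_\ell = \sum_\ell d_\ell = 0$, and then repeating the whole computation for the truncated sum $\sum_{j=1}^{i-1}\lambda_j x_j y_j$ to pass from $i=1$ to general $i$. You instead perform an Abel summation against the partial sums $\sigma_r, \tau_r$, which concentrates the entire role of \eqref{defnk} into the single coefficient identity $w_r = \tfrac{1}{\xi_i}(\Lambda_{r-1}+\rho)(\epsilon^i_{r-1}-\epsilon^i_r) = \delta_{r,i}$; I have checked this identity, the pointwise expansion $(x_j-p)(y_j-q) - k_j(k_j-1)c_jd_j = (1-k_j)\sigma_j\tau_j + k_j\sigma_{j+1}\tau_{j+1}$, and the boundary and $i=1$ cases, and all are correct. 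What your approach buys is a self-contained proof of \eqref{e:sumfinale} (rather than a citation), no double-sum bookkeeping, no separate treatment of the truncated sums, and a uniform mechanism (the same $w_r$ telescoping) handling both the linear and the quadratic identity; the cancellation $\sum_{k\neq j}c_kd_j = -\sum_j c_jd_j$ used in the paper is replaced by the vanishing of $\sigma_1,\tau_1,\sigma_{N+1},\tau_{N+1}$. The paper's version has the advantage of following the computation of the cited reference verbatim, making the provenance transparent. The reduction of general $p,q$ to $p=q=0$ via bilinearity and \eqref{e:sumfinale} is the same in both arguments.
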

\begin{proof}
We refer the reader to \cite[Lemma 3.10]{GMTDI} for a proof of \eqref{e:sumfinale}. Let us then fix $i \in \{1,\dots, N\}$ and show \eqref{e:sumfinalquad}. By writing
\[
\sum_j t^i_j x_jy_j  = \sum_j t^i_j (x_j-p)y_j + p\left(\sum_jt_j^iy_j\right) = \sum_j t^i_j (x_j-p)(y_j-q) + p\left(\sum_jt_j^iy_j\right) + q\sum_{j}t^i_j(x_j-p),
\]
we see that the last two terms can be computed using \eqref{e:sumfinale}. Thus, we add the assumption $p = q = 0$ and compute \eqref{e:sumfinale} in this simplified setting. The next computation is entirely analogous to the one of \cite[Theorem 1.2]{GMTDI}, and we repeat it for the reader's convenience. We start by computing the sum for $i = 1$, $\sum_j\lambda_jx_jy_j.$ We rewrite it in the following way:

\begin{equation}\label{quadsum}
\begin{split}
 \sum_j\lambda_j x_jy_j =& \sum_{j = 1}^N\lambda_j\left(\sum_{1\le a,b\le j - 1}c_ad_b + k_j\sum_{1\le a \le j - 1} c_ad_j + k_j\sum_{1\le b \le j - 1} c_jd_b + k_j^2c_jd_j\right) = \sum_{i,j}g_{ij}c_id_j,
\end{split}
\end{equation}
where
\[
g_{ij}=
\begin{cases}
 \lambda_ik_i + \sum_{r = i + 1}^N\lambda_r,\text{ if } i > j,\\
 \lambda_jk_j + \sum_{r = j + 1}^N\lambda_r,\text{ if } i < j,\\
\lambda_ik_i^2 + \sum_{r = i + 1}^N\lambda_r,\text{ if } i = j.
\end{cases}
\]
Using \eqref{defnk}, we compute:
\[
g_{ij} = g_{ji} = \lambda_ik_i + \sum_{r = i + 1}^N\lambda_r = \frac{\mu}{\mu - 1},\quad g_{ii} = k_i^2\lambda_i + \sum_{r = i + 1}^N\lambda_r = k_i(k_i - 1)\lambda_i + \frac{\mu}{\mu - 1}.
\]
Since $\sum_\ell c_\ell = 0 = \sum_\ell d_\ell$, then also $\sum_{i,j}c_id_j = 0$, and so $\sum_{i\neq j}c_id_j = -\sum_ic_id_i$. Hence, \eqref{quadsum} becomes
\[
 \sum_{i,j}g_{ij}c_id_j = \frac{\mu}{\mu - 1}\sum_{i\neq j}c_id_j + \sum_i\left( k_i(k_i - 1)\lambda_i + \frac{\mu}{\mu - 1}\right)c_id_i =  \sum_i k_i(k_i - 1)\lambda_ic_id_i.
\]
We just proved that
\begin{equation}\label{b}
\sum_j\lambda_jx_jy_j =  \sum_i k_i(k_i - 1)\lambda_ic_id_i,
\end{equation}
that is \eqref{e:sumfinalquad} in the case $i = 1$ and $p = q = 0$. Now we show it for $i$. Recall that 
\[
t^i= \frac{1}{\xi_i}(\mu\lambda_1,\dots,\mu\lambda_{i - 1},\lambda_i,\dots,\lambda_N)\, 
\]
and hence
\[
\sum_j t_j^ix_jy_j = \frac{1}{\xi_i}\left[(\mu-1)\sum_{j = 1}^{i-1}\lambda_jx_jy_j + \sum_{j = 1}^{N}\lambda_jx_jy_j \right] = \frac{1}{\xi_i}\left[(\mu-1)\sum_{j = 1}^{i-1}\lambda_jx_jy_j +\sum_j k_j(k_j - 1)\lambda_jc_jd_j \right]
\]
We again express the sum up to $i - 1$ in the following way:
\[
\sum_{j = 1}^{i - 1}\lambda_jx_jy_j = \sum_{k,j}^{i - 1}s_{kj}c_kd_j,
\]
where
\[
s_{\alpha\beta} =
\begin{cases}
 k_\alpha\lambda_\alpha + \dots + \lambda_{i - 1}, \text{ if }\alpha>\beta\\
 k_\beta\lambda_\beta + \dots + \lambda_{i - 1}, \text{ if }\alpha<\beta\\
 k_\alpha^2\lambda_\alpha + \dots + \lambda_{i - 1}, \text{ if } \alpha = \beta.
 \end{cases}
\]
Now
\[
 k_r\lambda_r + \dots + \lambda_{i - 1}=\frac{\mu( \sum_{j = 1}^{i - 1}\lambda_j) + \sum_{j = i}^{N}\lambda_j }{\mu - 1}
\]
and so
\[
 k_r\lambda_r + \dots + \lambda_{i - 1}=\frac{(\mu -1)( \sum_{j = 1}^{i - 1}\lambda_j) + 1 }{\mu - 1} = \frac{\xi_i}{\mu - 1} =: b_{i - 1}
\]
Hence
\[
\sum_{j = 1}^{i - 1}\lambda_j x_jy_j = \sum_{k,j}^{i - 1}s_{kj}c_kd_j = b_{i - 1}\sum_{k,j}^{i - 1}c_kd_j + \sum_{\alpha = 1}^{i - 1} k_\alpha(k_\alpha - 1)\lambda_\alpha c_\alpha d_\alpha
\]
and we conclude
\[
\sum_j t_j^ix_jy_j = \frac{1}{\xi_i}\left[(\mu-1)\sum_{j = 1}^{i-1}\lambda_jx_jy_j +\sum_j k_j(k_j - 1)\lambda_jc_jd_j \right] = \sum_{j}k_j(k_j-1)t^i_jc_jd_j
+ \sum_{j,k}^{i-1}c_jd_k.
\]
\end{proof}

\begin{proof}[Proof of Lemma~\ref{eq:IntermediaryExpressionOfStar}]

Fix $i \in \{1,\dots, N\}.$ Notice that, for any matrix $A 	\in \R^{2\times 2}$,
\[
(\star)_i = \xi_i \sum_{j = 1}^{N} t^i_j (Y_j)_{11} \det(Y_j - Y_i) = (\star)_i = \xi_i \sum_{j = 1}^{N} t^i_j (Y_j- A)_{11} \det(Y_j-A - (Y_i- A)),
\]
since by Proposition \ref{prop:CharacterizationTN},
\[
\sum_{j = 1}^{N} t^i_j \det(Y_j-Y_i) = 0.
\]
Thus, even though $P$ of \eqref{eq:RestrictionToTheUpperSquareMatrix} may not, in principle, be $0$, we can shift $\{Y_1,\dots, Y_N\}$ by $-P$ to add the additional assumption that
\begin{equation}\label{newYi}
Y_i = C_1 + \dots + C_{i-1} + k_iC_i, \text{ for }C_\ell \in \R^{2\times 2}_{\text{sym}},\; \det(C_\ell) = 0,\;\sum_\ell^N C_\ell = 0,
\end{equation}
without changing the value of $(\star)_i$. Since 
\begin{equation}\label{detAB}
\det(A + B) = \det(A) + \langle(\cofactor A)^T, B \rangle + \det(B)
\end{equation}
 for all $A,B \in \R^{2\times 2}$ and $(\cofactor A)^T = \cofactor A$ for all $A \in \R^{2 \times 2}_{\text{sym}}$, we can write
\begin{align*}
(\star)_i &= \xi_i \sum_{j = 1}^{N} t^i_j (Y_j)_{11} \det(Y_j - Y_i) \\
&= \xi_i \underbrace{\sum_{j = 1}^{N} t^i_j (Y_j)_{11} \det(Y_j)}_{=: I} -\xi_i \underbrace{\sum_{j = 1}^{N} t^i_j (Y_j)_{11} \langle\cofactor(Y_j),Y_i\rangle}_{=:II} +  \xi_i\underbrace{\det(Y_i) \sum_{j = 1}^{N} t^i_j (Y_j)_{11}}_{=: III}.
\end{align*}
We start by computing $I$. Recalling \eqref{PL}, let
\[
d_\ell := \langle (\cofactor C_\ell)^T, P_\ell\rangle = \langle \cofactor C_\ell, P_\ell\rangle .
\]
For $\{Y_1,\dots, Y_N\}$ given by \eqref{newYi}, we have
\begin{equation}\label{TN:det}
\det(Y_\ell) =  d_1 + \dots + k_\ell d_\ell,
\end{equation}
which can be shown inductively using \eqref{newYi} and \eqref{detAB}. Moreover, again using \eqref{detAB}, we rewrite
\[
d_\ell = \det(P_{\ell + 1}) -\det(P_\ell).
\]
It follows that
\begin{equation}\label{sum:zero}
\sum_{\ell = 1}^Nd_\ell = \det(P_{N + 1}) - \det(P_0) = 0.
\end{equation}
Equations \eqref{newYi}-\eqref{TN:det}-\eqref{sum:zero} allow us to use \eqref{e:sumfinalquad} of Lemma \ref{l:linear} to compute
\begin{equation}\label{I}
I =\sum_{j = 1}^{N} t^i_j (Y_j)_{11} \det(Y_j) = \sum_{j}k_j(k_j-1)t^i_j(C_j)_{11}\langle \cofactor C_j, P_j\rangle + \sum_{j,k}^{i-1}(C_j)_{11}\langle \cofactor C_k, P_k\rangle.
\end{equation}
Again \eqref{e:sumfinalquad} of Lemma \ref{l:linear} (applied componentwise) implies
\begin{equation}\label{II}
II = \sum_{j = 1}^{N} t^i_j (Y_j)_{11} \langle\cofactor(Y_j),Y_i\rangle = \sum_{j}k_j(k_j-1)t^i_j(C_j)_{11}\langle \cofactor C_j, Y_i\rangle+ \sum_{j,k}^{i-1}(C_j)_{11}\langle \cofactor C_k, Y_i\rangle
\end{equation}
Finally, \eqref{e:sumfinale} yields
\begin{equation}\label{III}
III =\det(Y_i) \sum_{j = 1}^{N} t^i_j (Y_j)_{11} = \det(Y_i)\sum_{j = 1}^{i-1}(C_{j})_{11}.
\end{equation}
Thus, combining \eqref{I}-\eqref{II}-\eqref{III}, we obtain
\begin{align*}
I- II + III &=  \sum_{j}k_j(k_j-1)t^i_j(C_j)_{11}\langle \cofactor C_j, P_j - Y_i\rangle + \sum_{j,k}^{i-1}(C_j)_{11}\langle \cofactor C_k, P_k\rangle \\
&- \sum_{j,k}^{i-1}(C_j)_{11}\langle \cofactor C_k, Y_i\rangle + \det(Y_i)\sum_{j = 1}^{i-1}(C_{j})_{11}.
\end{align*}
The proof is thus concluded if we show
\begin{equation}\label{sumzero:fine}
\sum_{j,k}^{i-1}(C_j)_{11}\langle \cofactor C_k, P_k\rangle - \sum_{j,k}^{i-1}(C_j)_{11}\langle \cofactor C_k, Y_i\rangle + \det(Y_i)\sum_{j = 1}^{i-1}(C_{j})_{11} = 0.
\end{equation}
To see this, we compute
\begin{align*}
\sum_{k = 1}^{i-1}\langle \cofactor C_k, P_k - Y_i\rangle &\overset{\eqref{detAB}}{=} \sum_{k = 1}^{i-1} [\det(P_{k + 1}- Y_i)- \det(P_k - Y_i)] \\
&= \det(P_i - Y_i) - \det(P_1 - Y_i) = \det(k_iC_i) - \det(Y_i) = -\det(Y_i)
\end{align*}
This shows \eqref{sumzero:fine} and concludes the proof.
\end{proof}

\section{Proof of Theorem \ref{teo1} and \ref{teo2}}\label{TheEnd}
In this section, we prove Theorem \ref{teo1} and \ref{teo2}. Both proofs will be split into different cases depending on the sign of the rank-one matrices $C_j$. By considering all possible cases, we will see that there always is an index $i$ such that the $(\star)_i \leq 0$, thus contradicting Lemma~\ref{lemma:FirstLemmaStarQuantity} and hence excluding the existence of a $T_N$ configuration contained in $K_a$. 
Throughout this section, we assume that $\{ X_1, \ldots, X_N \}$ is a $T_N$ configuration. We let $\{ Y_1, \ldots, Y_N \}$ be the $T_N$ configuration given by \eqref{eq:RestrictionToTheUpperSquareMatrix} and $\{ C_1, \ldots, C_N \}$ the rank-one matrices of this $T_N$ configuration.
Since $ C_j \in \R^{2 \times 2}_{\text{sym}}$ for all $1\le j \le N$ is a rank-one matrix, $C_j$ is either positive semi-definite or negative semi-definite. As usual for symmetric matrices, we write $C_j \leq 0$ when $C_j$ is negative semi-definite and $C_j \geq 0$ when $C_j$ is positive semi-definite. In particular, we will say that $C_j$ has $-$ sign when $C_j \leq 0$ and has $+$ sign when $C_j \geq 0$. Let us introduce the following notation to denote the cases that we consider. Let $s_1, \ldots, s_N$ be signs, i.e. $s_i = +$ or $-$. When saying that we consider the case
\[
 (s_1 \ldots s_N),
\]
we mean that we are reducing ourselves to the case where each $C_j$ has sign $s_j$.
For example for $N=4$, $(+ + + -)$ denotes the case where $C_1, C_2, C_3 \geq 0$ and $C_4 \leq 0$.
Before reducing ourselves to the cases $N = 4$ and $N = 5$, we can exclude some cases for general $N$. 

\begin{proposition}\label{prop:SuperEasyCase}
All the rank-one matrices $C_j$ cannot have the same sign.
\end{proposition}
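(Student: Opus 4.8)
The plan is to sidestep the refined identity \eqref{eq:FinalExpressionOfTheStarQuantity} entirely: the case in which all the $C_j$ share a sign is forced by the closing condition alone, together with the definiteness of the $C_j$ recalled just before the statement. The relevant inputs are that each $C_j = D_j^{Z_{12}}$ is a symmetric rank-one matrix (hence positive or negative semi-definite) and that the closing condition $\sum_{\ell} D_\ell = 0$ of \eqref{formX} restricts to $\sum_{\ell = 1}^N C_\ell = 0$.

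First I would argue by contradiction, assuming all the $C_j$ have the same sign, and treat the positive case $C_j \ge 0$ for every $j$. The crucial elementary observation is that a vanishing sum of positive semi-definite matrices has vanishing summands: for any $x \in \R^2$,
\[
 0 = x^T\left(\sum_{\ell = 1}^N C_\ell\right)x = \sum_{\ell = 1}^N x^T C_\ell x,
\]
and since every term is non-negative, each $x^T C_\ell x$ must vanish for all $x$, whence $C_\ell = 0$ for every $\ell$. The negative case is handled identically after replacing each $C_\ell$ by $-C_\ell \ge 0$, so in either case I conclude $C_\ell = 0$ for all $\ell$.

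It then remains to feed this back into the defining relations \eqref{eq:RestrictionToTheUpperSquareMatrix}--\eqref{PL}. Since $Y_j = P + C_1 + \dots + C_{j-1} + k_j C_j$, the vanishing of all the $C_\ell$ gives $Y_j = P$ for every $j$; in particular $v_j = (Y_j)_{22} = P_{22}$ takes the same value for all $j$. This contradicts Proposition~\ref{prop:NotAllTheSame}, which guarantees that a $T_N$ configuration contained in $K_a$ must satisfy $v_i \neq v_j$ for some pair $i \neq j$. The contradiction excludes the possibility that all the $C_j$ have the same sign.

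I do not anticipate a genuine obstacle here, which is consistent with the label of the statement. The only points requiring a little care are the justification that \emph{same sign} legitimately means \emph{all positive semi-definite or all negative semi-definite} — exactly the consequence of symmetry and rank one noted before the statement — and the bookkeeping showing that $C_\ell = 0$ for all $\ell$ collapses the entire upper $2 \times 2$ part of the configuration to the single matrix $P$, thereby equalizing all the $v_j$.
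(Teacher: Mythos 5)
Your proof is correct and follows essentially the same route as the paper: the key step in both is that a vanishing sum of symmetric rank-one matrices all of one sign forces every $C_\ell$ to vanish, collapsing the configuration. The only (immaterial) difference is the final contradiction: the paper appeals directly to the distinctness of the matrices required by Definition~\ref{deftn}, whereas you take the slightly longer path of equalizing the $v_j$ and invoking Proposition~\ref{prop:NotAllTheSame}.
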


\begin{proof}
This follows immediately from the fact that $\sum_{i = 1}^N C_i = 0$ and our definition of $T_N$ configurations. 
\end{proof}

\begin{proposition}\label{prop:EasyCaseOne}
If there is $j_0$ such that $C_{j_0}$ has different sign than all the other rank-one matrices $C_j$, $j \neq j_0$, then there is $i$ such that $(\star)_i \leq 0$.
\end{proposition}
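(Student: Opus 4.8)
The plan is to show that the single-odd-sign hypothesis forces such a strong degeneracy of the $C_i$ that every determinant entering $(\star)_i$ vanishes, so that $(\star)_i=0$ for all $i$. Since each $C_i\in\R^{2\times 2}_{\text{sym}}$ has rank at most one and a definite sign, I would first write $C_i=\epsilon_i\,v_iv_i^T$ with $\epsilon_i\in\{+1,-1\}$ recording the sign of $C_i$ and $v_i\in\R^2$. By assumption there is a common sign $\epsilon$ with $\epsilon_i=\epsilon$ for every $i\neq j_0$, and this common sign is the whole point of the computation below.

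The key step is to test the closing condition against the kernel direction of the distinguished matrix. Assuming $C_{j_0}\neq 0$, pick $v_{j_0}^\perp\in\R^2\setminus\{0\}$ orthogonal to $v_{j_0}$ and evaluate
\[
0=(v_{j_0}^\perp)^T\Big(\sum_{i=1}^N C_i\Big)v_{j_0}^\perp=\sum_{i=1}^N\epsilon_i\,(v_{j_0}^\perp\cdot v_i)^2 .
\]
The term $i=j_0$ drops out because $v_{j_0}^\perp\cdot v_{j_0}=0$, and all the remaining terms carry the same sign $\epsilon$. Hence $\sum_{i\neq j_0}(v_{j_0}^\perp\cdot v_i)^2=0$, which forces $v_{j_0}^\perp\cdot v_i=0$, i.e. $v_i\parallel v_{j_0}$, for every $i$. (If instead $C_{j_0}=0$, then $\sum_{i\neq j_0}C_i=0$ is a vanishing sum of semidefinite matrices of one sign, so each $C_i=0$ and the conclusion below is immediate.)

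Consequently every $C_i$ is a scalar multiple of the fixed rank-one matrix $v_{j_0}v_{j_0}^T$. Writing $Y_j-Y_i=\big(P+\sum_{\gamma<j}C_\gamma+k_jC_j\big)-\big(P+\sum_{\gamma<i}C_\gamma+k_iC_i\big)$ the constant $P$ cancels, so $Y_j-Y_i$ is a linear combination of the $C_\gamma$ and hence also a multiple of $v_{j_0}v_{j_0}^T$; in particular it has rank at most one and $\det(Y_j-Y_i)=0$ for all $i,j$. Plugging this into the expression of Lemma~\ref{lemma:FirstLemmaStarQuantity} gives
\[
(\star)_i=\xi_i\sum_{j=1}^N t^i_j\,(Y_j)_{11}\det(Y_j-Y_i)=0\qquad\text{for every }i,
\]
so in particular $(\star)_i\le 0$, as claimed.

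The part I expect to be the real content is recognizing this collapse rather than attempting a term-by-term sign estimate on \eqref{eq:FinalExpressionOfTheStarQuantity}: there the "left" sum $-\mu\sum_{\beta<i}\langle S_\beta,C_\beta\rangle$ need not have a favourable sign for any single $i$, so a naive estimate does not close. The essential use of the hypothesis is precisely that all $C_i$ with $i\neq j_0$ share one sign, which is what turns the projected closing condition into a sum of nonnegative terms; with two or more minority signs the same identity would only yield a linear relation among the $(v_{j_0}^\perp\cdot v_i)^2$ and no forced parallelism. The only bookkeeping to watch is the possibility that some $C_i$ (or $C_{j_0}$) vanish, which as noted above is harmless.
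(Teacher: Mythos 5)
Your proof is correct, but it takes a genuinely different route from the paper's. The paper works directly with the expression \eqref{eq:FinalExpressionOfTheStarQuantity}: it splits into the two cases $C_{j_0}\ge 0$ versus $C_{j_0}\le 0$, evaluates $(\star)_{j_0}$ (respectively $(\star)_{j_0-1}$, or $(\star)_N$ when $j_0=1$), and closes the estimate by replacing each $S_\beta$ (or $T_\beta$) by the extremal one using the monotonicity $S_\beta\le S_{\beta+1}$, $T_\beta\ge T_{\beta+1}$ together with the closing condition $\sum_\ell C_\ell=0$ --- so, contrary to your side remark, the term-by-term estimate does close, precisely because the minority-sign term carries the favourable factor $-\mu k_{j_0}$ or $(1-k_{j_0})$. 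Your argument instead tests $\sum_i C_i=0$ against the kernel direction of $C_{j_0}$, which (since all $C_i$ with $i\ne j_0$ share one sign) forces every $C_i$ to be a multiple of a single rank-one matrix; hence every $Y_j-Y_i$ is singular and $(\star)_i=0$ for \emph{all} $i$ directly from the definition in Lemma~\ref{lemma:FirstLemmaStarQuantity}, with no need for Lemma~\ref{eq:IntermediaryExpressionOfStar} or the $S_\beta$, $T_\beta$ formalism. This is shorter, yields the stronger structural conclusion that the restricted configuration $\{Y_1,\dots,Y_N\}$ degenerates entirely, and correctly handles the vanishing cases; its limitation is that it is specific to the one-odd-sign pattern and gives no leverage on the mixed-sign cases (e.g.\ $(+-+-)$) for which the paper's $S_\beta,T_\beta$ machinery is actually built, so it cannot replace that machinery elsewhere in Section \ref{TheEnd}.
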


\begin{proof}
The proof is divided into two cases.

\fbox{Case 1: ( $\exists j_0$ such that $C_{j_0} \geq 0$ and $C_j \leq 0, \forall j \neq j_0$)}
We have
\begin{align*}
(\star)_{j_0} &= - \mu (\langle S_1, C_1 \rangle + \ldots + \langle S_{j_0-1}, C_{j_0-1} \rangle + k_{j_0} \langle S_{j_0}, C_{j_0} \rangle) \\
& \qquad + (1 - k_{j_0}) \langle T_{j_0}, C_{j_0} \rangle + \langle T_{j_0+1}, C_{j_0+1} \rangle + \ldots + \langle T_{N}, C_{N} \rangle \\
&\leq - \mu (\langle S_1, C_1 \rangle + \ldots + \langle S_{j_0-1}, C_{j_0-1} \rangle + k_{j_0} \langle S_{j_0}, C_{j_0} \rangle) \\
&\leq - \mu (\langle S_1, C_1 \rangle + \ldots + \langle S_{j_0-1}, C_{j_0-1} \rangle + \langle S_{j_0}, C_{j_0} \rangle) \\
&\leq - \mu \langle S_{j_0}, C_1 + \ldots + C_{j_0} \rangle \\
&= \mu \langle S_{j_0}, C_{j_0+1} + \ldots + C_N \rangle \leq 0.\\
\end{align*}

\fbox{Case 2: ($\exists j_0$ such that $C_{j_0} \leq 0$ and $C_j \geq 0, \forall j \neq j_0$)}
In this case, if $j_0 > 1$
\begin{align*}
(\star)_{j_0-1} &= - \mu (\langle S_1, C_1 \rangle + \ldots + \langle S_{j_0-2}, C_{j_0-2} \rangle + k_{j_0-1} \langle S_{j_0-1}, C_{j_0-1} \rangle) \\
& \qquad + (1 - k_{j_0-1}) \langle T_{j_0-1}, C_{j_0-1} \rangle + \langle T_{j_0}, C_{j_0} \rangle + \ldots + \langle T_{N}, C_{N} \rangle \\
&\leq \langle T_{j_0}, C_{j_0} \rangle + \ldots + \langle T_{N}, C_{N} \rangle \\
&\leq \langle T_{j_0}, C_{j_0} \rangle + \langle T_{j_0}, C_{j_0+1} \rangle + \ldots + \langle T_{j_0}, C_{N} \rangle \\
&= \langle T_{j_0}, C_{j_0} + \ldots + C_{N} \rangle \\
&= - \langle T_{j_0}, C_1 + \ldots + C_{j_0-1} \rangle \leq 0.
\end{align*}
On the other hand, if $j_0 = 1$, the proof follows immediately considering $(\star)_{N}$ and using that $\langle S_1, C_1\rangle = 0$.
\end{proof}

\begin{proposition}\label{prop:EasyCaseTwo}
If there is $j_0 \in \{ 2, \ldots, N-1 \}$ such that $C_j \geq 0$ for all $2 \leq j \leq j_0$ and $C_j \leq 0$ for all $j_0 + 1 \leq j \leq N-1$, then there exists $i$ such that $(\star)_i \leq 0$.
\end{proposition}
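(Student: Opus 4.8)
The plan is to apply the closed formula \eqref{eq:FinalExpressionOfTheStarQuantity} with the specific choice $i = j_0$ and to show that, under the prescribed sign pattern, \emph{every} term on the right-hand side is nonpositive, so that $(\star)_{j_0} \le 0$. With $i = j_0$ the formula reads
\[
(\star)_{j_0} = -\mu\sum_{\beta=1}^{j_0-1}\langle S_\beta, C_\beta\rangle + \sum_{\beta=j_0+1}^{N}\langle T_\beta, C_\beta\rangle - \mu k_{j_0}\langle S_{j_0}, C_{j_0}\rangle + (1-k_{j_0})\langle T_{j_0}, C_{j_0}\rangle,
\]
and I would estimate the four groups of terms separately.

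For the first sum, I would recall from the remark following \eqref{eq:FinalExpressionOfTheStarQuantity} that each $S_\beta$ is positive semi-definite. The $\beta = 1$ term vanishes since $S_1 = 0$, which is precisely what disposes of the unspecified sign of $C_1$; for $2 \le \beta \le j_0 - 1$ the hypothesis gives $C_\beta \ge 0$, hence $\langle S_\beta, C_\beta\rangle \ge 0$, so the whole group $-\mu\sum_{\beta=1}^{j_0-1}\langle S_\beta, C_\beta\rangle$ is $\le 0$. Symmetrically, each $T_\beta \ge 0$; the $\beta = N$ term vanishes since $T_N = 0$, which handles the unspecified sign of $C_N$, while for $j_0+1 \le \beta \le N-1$ we have $C_\beta \le 0$, so $\langle T_\beta, C_\beta\rangle \le 0$ and the second group is also $\le 0$.

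The two remaining terms both involve $C_{j_0} \ge 0$. Since $S_{j_0}, T_{j_0} \ge 0$, we get $\langle S_{j_0}, C_{j_0}\rangle \ge 0$ and $\langle T_{j_0}, C_{j_0}\rangle \ge 0$; combined with $\mu > 1 > 0$, $k_{j_0} > 1 > 0$ and $1 - k_{j_0} < 0$, both $-\mu k_{j_0}\langle S_{j_0}, C_{j_0}\rangle$ and $(1-k_{j_0})\langle T_{j_0}, C_{j_0}\rangle$ are $\le 0$. Adding the four nonpositive contributions yields $(\star)_{j_0} \le 0$, as desired.

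Unlike the previous two propositions, I do not expect to need the monotonicity of $S_\beta, T_\beta$ nor the closing condition $\sum_\ell C_\ell = 0$: the index $i = j_0$ is chosen precisely so that the block structure of the signs lines up term-by-term with the structure of \eqref{eq:FinalExpressionOfTheStarQuantity}. The one point requiring genuine care is the treatment of the endpoints $C_1$ and $C_N$, whose signs are left free by the hypothesis; the vanishing of $S_1$ and $T_N$ is exactly what neutralizes them, and I would invoke it explicitly. A minor bookkeeping check is the degenerate possibility that the negative block $\{j_0+1, \dots, N-1\}$ is empty (when $j_0 = N-1$) or that the positive block reduces to $\{j_0\}$ (when $j_0 = 2$), but in those cases the corresponding sum is simply void and the argument goes through unchanged.
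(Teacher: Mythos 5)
Your proof is correct and coincides with the paper's own argument: the paper also evaluates $(\star)_{j_0}$ via \eqref{eq:FinalExpressionOfTheStarQuantity} and observes that all four groups of terms are nonpositive under the given sign pattern. Your explicit handling of the endpoints via $S_1 = 0$ and $T_N = 0$ is a point the paper leaves implicit, but it is the same proof.
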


\begin{proof}
We have
\[
 (\star)_{j_0} = - \mu \sum_{\beta = 1}^{j_0-1} \underbrace{\langle S_\beta, C_\beta \rangle}_{\geq 0} + \sum_{\beta = j_0+1}^{N} \underbrace{\langle T_\beta, C_\beta \rangle}_{\leq 0} - \mu k_{j_0} \underbrace{\langle S_{j_0}, C_{j_0} \rangle}_{\geq 0} + (1 - k_{j_0}) \underbrace{\langle T_{j_0}, C_{j_0} \rangle}_{\geq 0} \leq 0
\]
as desired. This finishes the proof.
\end{proof}

We can now prove Theorem \ref{teo1}.

\begin{proof}[Proof of Theorem \ref{teo1}]
Let $\{ X_1, \ldots, X_4 \}$ be a $T_4$ configuration and assume by contradiction that $$\{ X_1, \ldots, X_4 \} \subset K_a$$ for some $a$ which is convex and increasing. Then let $\{ Y_1, \ldots, Y_4 \}$ be the $T_4$ configuration given by \eqref{eq:RestrictionToTheUpperSquareMatrix}. We will prove that there exists an $i$ such that $(\star)_i \leq 0$. To this end, we consider all possible cases of combinations of signs of the matrices $C_j$. In the case $N = 4$ that we are considering there are 16 cases. The cases $(+ + + +)$ and $(- - - -)$ can already be excluded by Proposition~\ref{prop:SuperEasyCase}. Due to Proposition~\ref{prop:EasyCaseOne} and Proposition~\ref{prop:EasyCaseTwo}, we can exclude 11 more cases and the only cases that remain are
\begin{enumerate}
\item $(+ - + -),$ \label{item:N4Case1}
\medskip
\item $(+ - - +)$ and \label{item:N4Case2}
\medskip
\item $(- - + +).$ \label{item:N4Case3}
\end{enumerate}
We now prove that in all of these three cases there is an index $i$ such that $(\star)_i \leq 0$.

\emph{Case \ref{item:N4Case1}.}
Since $\langle S_1, C_1 \rangle = 0$ and $\langle S_3, C_3 \rangle, \langle T_3, C_3 \rangle \geq 0$, we have
\begin{align*}
(\star)_3 &= - \mu \langle S_2, C_2 \rangle - \mu k_3 \langle S_3, C_3 \rangle + (1 - k_3) \langle T_3, C_3 \rangle \leq - \mu (\langle S_2, C_2 \rangle + \langle S_3, C_3 \rangle) \\
&= - \mu (\langle S_2, C_1 \rangle + \langle S_2, C_2 \rangle + \langle S_2, C_3 \rangle) \leq - \mu \langle S_2, C_1 + C_2 + C_3 \rangle = \mu \langle S_2, C_4 \rangle \leq 0.
\end{align*}

\emph{Case \ref{item:N4Case2}.}
We have 
\[
 (\star)_1 = (1 - k_1) \langle T_1, C_1 \rangle + \langle T_2, C_2 \rangle + \langle T_3, C_3 \rangle \leq 0.
\]

\emph{Case \ref{item:N4Case3}.}
Since $\langle S_4, C_4 \rangle \geq 0$ and $\langle S_4, C_4 \rangle \geq \langle S_3, C_4 \rangle$
\begin{align*}
(\star)_4 &= - \mu \langle S_2, C_2 \rangle - \mu \langle S_3, C_3 \rangle - \mu k_4 \langle S_4, C_4 \rangle \leq - \mu (\langle S_2, C_2 \rangle + \langle S_3, C_3 \rangle + \langle S_4, C_4 \rangle) \\
&\leq - \mu (\langle S_3, C_2 \rangle + \langle S_3, C_3 \rangle + \langle S_3, C_4 \rangle) = - \mu \langle S_3, C_2 + C_3 + C_4 \rangle = \mu \langle S_3, C_1 \rangle \leq 0.
\end{align*}

Therefore we deduce that there is an index $i$ such that $(\star)_i \leq 0$. This finishes the proof.
\end{proof}

We now prove Theorem \ref{teo2}.

\begin{proof}[Proof of Theorem \ref{teo2}]
Let $\{ X_1, \ldots, X_5 \}$ be a $T_5$ configuration and assume by contradiction that $$\{ X_1, \ldots, X_5 \} \subset K_a$$ for some $a$ which is convex and increasing. Then let $\{ Y_1, \ldots, Y_5 \}$ be the $T_5$ configuration given by \eqref{eq:RestrictionToTheUpperSquareMatrix}. We will prove that there exists an $i$ such that $(\star)_i \leq 0$. To this end, we consider all possible cases of combinations of signs of the matrices $C_j$. In the case $N = 5$, there are 32 cases. By Proposition~\ref{prop:SuperEasyCase}, Proposition~\ref{prop:EasyCaseOne} and Proposition~\ref{prop:EasyCaseTwo}, we can exclude 19 cases. The 13 remaining cases are
\begin{enumerate}
\item (+ - + - -) \label{item:N5Case1}
\medskip
\item (+ - - + -) \label{item:N5Case2}
\medskip
\item (+ - - - +) \label{item:N5Case3}
\medskip
\item (- + - + -) \label{item:N5Case4}
\medskip
\item (- - + + -) \label{item:N5Case5}
\medskip
\item (- - + - +) \label{item:N5Case6}
\medskip
\item (- - - + +) \label{item:N5Case7}
\medskip
\item (- - + + +) \label{item:N5Case8}
\medskip
\item (- + - + +) \label{item:N5Case9}
\medskip
\item (+ - - + +) \label{item:N5Case10}
\medskip
\item (+ - + - +) \label{item:N5Case11}
\medskip
\item (+ - + + -) \label{item:N5Case12}
\medskip
\item (+ + - + -) \label{item:N5Case13}
\end{enumerate}

\emph{Case \ref{item:N5Case1}.} 
\begin{align*}
 (\star)_3 &= - \mu ( \langle S_2, C_2 \rangle + k_3 \langle S_3, C_3 \rangle ) + (1 - k_3) \langle T_3, C_3 \rangle + \langle T_4, C_4 \rangle \leq - \mu ( \langle S_2, C_2 \rangle + \langle S_3, C_3 \rangle ) \\
 &= - \mu ( \langle S_1, C_1 \rangle + \langle S_2, C_2 \rangle + \langle S_3, C_3 \rangle )\le - \mu \langle S_2, C_1 + C_2 + C_3 \rangle = \mu \langle S_2, C_4 + C_5 \rangle \leq 0
\end{align*}

\emph{Case \ref{item:N5Case2}.} 
\begin{align*}
 (\star)_1 &= - \mu k_1 \langle S_1, C_1 \rangle + (1 - k_1) \langle T_1, C_1 \rangle + \langle T_2, C_2 \rangle + \langle T_3, C_3 \rangle + \langle T_4, C_4 \rangle + \langle T_5, C_5 \rangle \\
 &\leq \langle T_2, C_2 \rangle + \langle T_3, C_3 \rangle + \langle T_4, C_4 \rangle + \langle T_5, C_5 \rangle \leq \langle T_4, C_2 + C_3 + C_4 + C_5 \rangle \\ 
 &= - \langle T_4, C_1 \rangle \leq 0
\end{align*}

\emph{Case \ref{item:N5Case3}.} 
\begin{align*}
 (\star)_1 &= (1 - k_1) \langle T_1, C_1 \rangle + \langle T_2, C_2 \rangle + \langle T_3, C_3 \rangle + \langle T_4, C_4 \rangle \leq 0
\end{align*}

\emph{Case \ref{item:N5Case4}.} 
\begin{align*}
 (\star)_4 &\leq - \mu (\langle S_2, C_2 \rangle + \langle S_3, C_3 \rangle + k_4 \langle S_4, C_4 \rangle) \leq - \mu (\langle S_3, C_2 \rangle + \langle S_3, C_3 \rangle + \langle S_3, C_4 \rangle) \\
 & = - \mu \langle S_3, C_2 + C_3 + C_4 \rangle = \mu \langle S_3, C_1 + C_5 \rangle \leq 0.
 \end{align*}
 
 \emph{Case \ref{item:N5Case5}.} 
\begin{align*}
 (\star)_4 &\leq - \mu (\langle S_2, C_2 \rangle + \langle S_3, C_3 \rangle + k_4 \langle S_4, C_4 \rangle) \leq - \mu (\langle S_3, C_2 \rangle + \langle S_3, C_3 \rangle + \langle S_3, C_4 \rangle) \\
 & = - \mu \langle S_3, C_2 + C_3 + C_4 \rangle = \mu \langle S_3, C_1 + C_5 \rangle \leq 0.
 \end{align*}
 
  \emph{Case \ref{item:N5Case6}.} 
\begin{align*}
 (\star)_5 &= - \mu (\langle S_2, C_2 \rangle + \langle S_3, C_3 \rangle + \langle S_4, C_4 \rangle + k_5 \langle S_5, C_5 \rangle) \\
 &\leq -\mu \langle S_4, C_2 + C_3 + C_4 + C_5 \rangle = \mu \langle S_4, C_1 \rangle \leq 0.
 \end{align*}
 
   \emph{Case \ref{item:N5Case7}.} 
\begin{align*}
 (\star)_5 &= - \mu (\langle S_2, C_2 \rangle + \langle S_3, C_3 \rangle + \langle S_4, C_4 \rangle + k_5 \langle S_5, C_5 \rangle) \\
 &\leq -\mu \langle S_5, C_2 + C_3 + C_4 + C_5 \rangle = \mu \langle S_5, C_1 \rangle \leq 0.
 \end{align*}
 
    \emph{Case \ref{item:N5Case8}.} 
\begin{align*}
 (\star)_5 &= - \mu (\langle S_2, C_2 \rangle + \langle S_3, C_3 \rangle + \langle S_4, C_4 \rangle + k_5 \langle S_5, C_5 \rangle) \\
 &\leq -\mu \langle S_3, C_2 + C_3 + C_4 + C_5 \rangle = \mu \langle S_3, C_1 \rangle \leq 0.
 \end{align*}
 
     \emph{Case \ref{item:N5Case9}.} 
\begin{align*}
 (\star)_5 &= - \mu (\langle S_2, C_2 \rangle + \langle S_3, C_3 \rangle + \langle S_4, C_4 \rangle + k_5 \langle S_5, C_5 \rangle) \\
 &\leq -\mu \langle S_3, C_2 + C_3 + C_4 + C_5 \rangle = \mu \langle S_3, C_1 \rangle \leq 0.
 \end{align*}
 
      \emph{Case \ref{item:N5Case10}.}
\begin{align*}
 (\star)_1 &= - \mu k_1 \langle S_1, C_1 \rangle + (1 - k_1) \langle T_1, C_1 \rangle + \langle T_2, C_2 \rangle + \langle T_3, C_3 \rangle + \langle T_4, C_4 \rangle \\
 &\leq \langle T_4, C_2 \rangle + \langle T_4, C_3 \rangle + \langle T_4, C_4 \rangle = \langle T_4, C_2 + C_3 + C_4 \rangle = - \langle T_4, C_1 + C_5 \rangle \leq 0.
 \end{align*}

       \emph{Case \ref{item:N5Case11}.} 
\begin{align*}
 (\star)_1 &= - \mu k_1 \langle S_1, C_1 \rangle + (1 - k_1) \langle T_1, C_1 \rangle + \langle T_2, C_2 \rangle + \langle T_3, C_3 \rangle + \langle T_4, C_4 \rangle + \langle T_5, C_5 \rangle \\
 &\leq \langle T_3, C_2 \rangle + \langle T_3, C_3 \rangle + \langle T_3, C_4 \rangle = \langle T_3, C_2 + C_3 + C_4 \rangle = - \langle T_3, C_1 + C_5 \rangle \leq 0.
\end{align*}
 
        \emph{Case \ref{item:N5Case12}.} 
\begin{align*}
 (\star)_4 & \le - \mu (\langle S_2, C_2 \rangle + \langle S_3, C_3 \rangle + k_4 \langle S_4, C_4 \rangle) \leq - \mu \langle S_2, C_2 + C_3 + C_4 \rangle \\
 &= - \mu \langle S_2, C_1 + C_2 + C_3 + C_4 \rangle \leq \mu \langle S_2, C_5 \rangle \leq 0
 \end{align*}
 
         \emph{Case \ref{item:N5Case13}.} 
\begin{align*}
 (\star)_2 &= - \mu k_2 \langle S_2 , C_2 \rangle + (1 - k_2) \langle T_2 , C_2 \rangle + \langle T_3 , C_3 \rangle + \langle T_4 , C_4 \rangle + \langle T_5 , C_5 \rangle \\
 &\leq \langle T_4 , C_3 + C_4 + C_5 \rangle = - \langle T_4 , C_1 + C_2 \rangle \leq 0.
 \end{align*}
\end{proof}

\bibliographystyle{plain}
\bibliography{ReferencesAcceptedVersion}

\end{document}